\begin{document}
\newtheorem{theorem}{Theorem}[section]
\newtheorem{lemma}[theorem]{Lemma}
\newtheorem{definition}[theorem]{Definition}
\newtheorem{corollary}[theorem]{Corollary}
\newtheorem{remark}[theorem]{Remark}
\numberwithin{equation}{section}
\numberwithin{table}{section}
\numberwithin{figure}{section}
\allowdisplaybreaks[4]
\def\sss{\scriptscriptstyle}
\def\R{\mathbb{R}}
\def\O{\Omega}
\def\LT{{L_2(\O)}}
\def\HOne{{H^1(\O)}}
\def\SS{H^2(\O)\cap H^1_0(\O)}
\def\cT{\mathcal{T}}
\def\cV{\mathcal{V}}
\def\cE{\mathcal{E}}
\def\cF{\mathcal{F}}
\def\hF{h_{\scriptscriptstyle F}}
\def\jumpF#1{[\hspace{-2pt}[#1]\hspace{-2pt}]}
\def\jumpE#1{[\hspace{-2pt}[#1]\hspace{-2pt}]}
\def\jump#1{[\hspace{-2pt}[#1]\hspace{-2pt}]}
\def\p{\partial}
\def\Cordes{1-(1-\epsilon)^\frac12}
\def\DeltaF{\Delta_{\scriptscriptstyle F}}
\def\nablaF{\nabla_{\hspace{-2pt}\scriptscriptstyle F}}
\def\vI{v_{\sss I}}
\def\vT{v_{\sss T}}
\def\tE{\tilde{E}}
\def\G{\Gamma}
\def\sC{\mathscr{C}}
\def\sQ{\mathscr{Q}}
\def\sF{\mathscr{F}}
\def\LHat{L_{\hat T}}
\def\TSpace{(H^2\times H^1)(\p T)}
\def\hatTSpace{(H^2\times H^1)(\p \hat{T})}
\def\FSoace{(H^2\times H^1)(\p F)}
\def\fT{f_{v,{\sss T}}}
\def\fpT{f_{v,\sss \p T}}
\def\fF{f_{v,\sss F}}
\def\fpF{f_{v,\sss \p F}}
\def\LTT{{L_2(\hat{T})}}
\def\hT{h_{\sss T}}
\def\gT{g_{v,{\sss T}}}
\def\qF{q_{v,\sss \p F}}
\def\gF{g_{v,\sss F}}
\def\bw{\bm{w}}
\def\argmin{\mathop{\rm argmin}}
\def\TPoly{(H^2\times H^1)_{k,k-1}(\p T)}
\def\tTPoly{(H^2\times H^1)_{k,k-1}(\p \tilde T)}
\def\FPoly{(H^2\times H^1)_{k,k-1}(\p F)}
\def\hatTPoly{(H^2\times H^1)_{k,k-1}(\p\hat T)}
\def\cQ{\mathcal{Q}}
\def\cB{\mathcal{B}}
\def\ET{H^\frac52(T)}
\def\Tr{\mathrm{Tr}\,}
\def\HTO{H^2(\p T)\times H^1(\p T)}
\def\LTBT{L_2(\p T)}
\def\bn{\bm{n}}
\def\bz{\bm{z}}
\def\VE{\mathscr{V}^k(T)}
\def\hatVE{\mathscr{V}^k(\hat{T})}
\def\VEF#1{\cB^k(F_{#1})}
\def\RPTwo{\R_+\times\R_+}
\def\RPThree{\R_+\times\R_+\times\R_+}
\def\vF{v_{\sss F}}
\def\sG{\mathscr{G}}
\def\tT{\tilde T}
\def\tTSpace{(H^2\times H^1)(\p \tT)}
\def\tp{\tilde p}
\def\tf{\tilde f}
\def\tg{\tilde g}
\def\tn{\tilde n}
\def\te{\tilde e}
\def\bt{\bm{t}}
\def\tF{\tilde{F}}
\def\COne{E^1_0(\p T)}
\def\mfg{\mathfrak{g}}
\def\GT{G_{\sss T, p_\dag}}
\def\HT{H_{\sss T, p_\dag}}
\def\MT{M_{\sss T,p_\dag}}
\def\sp{\hspace{1pt}}
  \title{Virtual Enriching Operators}
\author{Susanne C. Brenner}
\address{Susanne C. Brenner, Department of Mathematics and Center for
Computation and Technology, Louisiana State University, Baton Rouge}
\email{brenner@math.lsu.edu}
\author{Li-yeng Sung}
 \address{Li-yeng Sung,
 Department of Mathematics and Center for Computation and Technology,
 Louisiana State University, Baton Rouge, LA 70803, USA}
\email{sung@math.lsu.edu}
\thanks{This work was supported in part
 by the National Science Foundation under Grant No.
 DMS-16-20273.}
\begin{abstract}
 We construct bounded linear operators that map $H^1$ conforming Lagrange finite element spaces
 to $H^2$ conforming virtual element spaces in two and three dimensions.
 These operators are useful for the analysis of nonstandard finite element methods.
\end{abstract}
\maketitle
\section{Introduction}\label{sec:Introduction}
 Let $\O\in\R^d$ ($d=2,3$) be a bounded polygonal/polyhedral domain, $\cT_h$ be a simplicial
 triangulation of $\O$ and $V_h\subset H^1(\O)$ be the Lagrange $P_k$ finite element space
 with $k\geq 3$. The mesh dependent semi-norm $\|\cdot\|_h$ is defined by
\begin{equation}\label{eq:hNorm}
  \|v\|_h^2=\|D_h^2v\|_\LT^2+J(v,v) \qquad\forall\,v\in V_h,
\end{equation}
 where $D_h^2v$ is the piecewise Hessian of $v$ with respect to $\cT_h$, and
\begin{alignat}{3}
  J(w,v)&=\sum_{e\in\cE_h^i}h_e^{-1}\int_e \jump{\p w/\p n}\jump{\p v/\p n}ds
  &\qquad&\text{for $d=2$},  \label{eq:2DJump}\\
  J(w,v)&=\sum_{F\in\cF_h^i}\hF^{-1}\int_F \jump{\p w/\p n}\jump{\p v/\p n}dS
  &\qquad&\text{for $d=3$}.\label{eq:3DJump}
\end{alignat}
 Here $\cE_h^i$ (resp., $\cF_h^i$)  is the set of interior edges (resp., faces) of $\cT_h$,
 $h_e$ (resp., $\hF$) is the diameter of the edge $e$ (resp., face $F$),
 $\jump{\p v/\p n}$ is the jump of the normal derivative across an edge $e$ (resp., face $F$),
 and $ds$ (resp., $dS$) is the infinitesimal length (resp., area).
\par
 Our goal is to construct a linear operator $E_h:V_h\longrightarrow H^2(\O)$ such that
\begin{alignat}{3}
 \|v-E_hv\|_h&\leq C_\sharp \sqrt{J(v,v)}&\qquad&\forall\,v\in V_h, \label{eq:EhError}\\
 \sum_{\ell=0}^2h^\ell |\zeta-E_h\Pi_h\zeta|_{H^\ell(\O)}&\leq C_\flat h^{k+1}
  |\zeta|_{H^{k+1}(\O)}
 &\qquad&\forall\,\zeta\in H^{k+1}(\O),\label{eq:EhPihError}
 \end{alignat}
 where $\Pi_h:C(\bar\O)\longrightarrow V_h$ is the Lagrange nodal interpolation operator, and
 the positive constants $C_\sharp$ and $C_\flat$ only depend on the shape regularity of
 $\cT_h$ and $k$. Moreover, the operator $E_h$ maps $V_h\cap H^1_0(\O)$ into $\SS$.
\par
 Enriching operators that satisfy \eqref{eq:EhError} and \eqref{eq:EhPihError} are useful
 for {\em a priori} and {\em a posteriori} error analyses for fourth order elliptic problems
 \cite{BGS:2010:C0IP,GHV:2011:DG,Brenner:2012:C0IP,BGSZ:2017:AdaptiveVI4,BSung:2017:State},
 and they also play an important role in fast solvers for fourth order problems
 \cite{Brenner:1996:NCSchwarz,Brenner:1999:CNM,BW:2005:TLDG4}.  A recent application to
 Hamilton-Jacobi-Bellman equations can be found in \cite{NW:2017:MT}.
\par
 In the two dimensional case, one can construct $E_h$ through the $C^1$ macro finite elements
 in \cite{CT:1965:Element,Ciarlet:1974:HCT,Percell:1976:CT,Peano:1976:Macro,DDPS:1979:Macro}.
 This was carried out in \cite{BGS:2010:C0IP} for the quadratic element and in \cite{GHV:2011:DG}
 for higher order elements.  However macro elements of order higher than 3 are not available in
 three dimensions and therefore this approach can only be carried out for quadratic and cubic
 Lagrange elements (cf. \cite{NW:2017:MT})
  using the three dimensional cubic Clough-Tocher macro element from \cite{WF:1987:CT}.
\par
 We take a different approach in this paper by connecting the $k$-th order
 Lagrange finite element space to
 a $k$-th order $H^2$ conforming virtual element space.
 In two dimensions such spaces are already in the literature
 \cite{BM:2013:VEM,CM:2016:VEM},
 and we will develop
 a version of three dimensional $H^2$ conforming virtual element spaces that are sufficient
 for the construction of $E_h$.
%
\begin{remark}\label{rem:Quadratic}\rm
  The assumption that the order of the Lagrange finite element space is at least 3 allows a
  uniform construction of $E_h$.  For the
  quadratic Lagrange finite element space we can simply
  take $E_h$  to be the restriction of the cubic enriching operator.
\end{remark}
\par
 The rest of the paper is organized as follows.  The construction of $E_h$ in two dimensions
 is carried out in Section~\ref{sec:2D}, followed by the construction in three dimensions in
  Section~\ref{sec:3D} and some concluding remarks in Section~\ref{sec:Conclusions}.
    Appendix~\ref{append:ITT}
   contains some technical results concerning inverse trace
  theorems that are needed for the construction of $H^2$ conforming virtual elements.
\par
 A list of notations and conventions that will be used throughout the paper is provided here for convenience.
\begin{itemize}
\item A  polygon/polyhedron is an open subset in $\R^2/\R^3$,
 an edge of a polygon/polyhedron does not include the endpoints and a face of a polyhedron does not
 include the vertices and edges.  These conventions apply in particular to triangles and tetrahedra.
\item Let $G$ be an open line segment, a triangle or a tetrahedron, and $k$ be an integer.
 $P_k(G)$ is the space of polynomials of total degree $\leq k$ restricted to $G$ if $k\geq0$ and
 $P_k(G)=\{0\}$ if $k<0$.  We say that two functions $u$ and $v$ defined on $G$ have identical moments up
 to order $\ell$ if the integral of $(u-v)q$ on $G$ vanishes for all $q\in P_\ell(G)$.  The orthogonal projection
 from $L_2(G)$ onto $P_k(G)$ is denoted by $Q_{G,k}$.
\item $\cV_h$ is the set of all the vertices of the triangles/tetrahedra in $\cT_h$, $\cV_h^i$ is the
  set of vertices in $\O$ and $\cV_h^b$ is the set of vertices on $\p\O$.
\item $\cE_h$ is the set of all the edges of the triangles/tetrahedra in $\cT_h$,
 $\cE_h^i$ is the set of edges in $\O$ and $\cE_h^b$ is the set of edges that are subsets of $\p\O$.
\item $\cF_h$ is the set of all the faces of the tetrahedra in $\cT_h$,
 $\cF_h^i$ is the set of faces in $\O$ and $\cF_h^b$ is the set of faces that are subsets of $\p\O$.
\item $\cT_p$ is the set of all the triangles/tetrahedra in $\cT_h$ that share $p$ as a common vertex.
\item $\cT_e$ is the set of all the triangles/tetrahedra in $\cT_h$ that share $e$ as a common edge.
\item $\cT_F$ is the set of all the tetrahedra in $\cT_h$ that share $F$ as a common face.
%
%
\item $\cF_e$ is the set of the faces of the tetrahedra in $\cT_h$ that share $e$ as a common edge.
\item If $v$ is a function defined on a triangle/tetrahedron, then $v_e$ (resp., $\vF$) is the
 restriction of $v$ to an edge $e$ (reps., a face $F$).
%
\item If $v$ is a function defined on a triangle/tetrahedron, then $\p v/\p n$ denotes the
 outward normal derivative of $v$ along $\p T$.  In the case of a triangle (resp. tetrahedron),
 $\p v/\p n$ is double-valued at the vertices (resp., edges) of $T$.
\item If $e$ is an edge of the triangle $T$, then $\bn_{e,\sss T}$ is the unit vector
  orthogonal to $e$ and pointing towards the outside of $T$.  If $e$ is an edge of a face $F$ of a
  tetrahedron $T$, then $\bn_{e,\sss F}$ is the unit vector tangential to $F$, orthogonal to $e$
  and pointing towards the outside of $F$.
\item If $F$ is a face of the tetrahedron $T$, then $\bn_{\sss F,\sss T}$ is the unit vector
 orthogonal to $F$ and pointing towards the outside of $T$.
%
\end{itemize}

\section{The Two Dimensional Case}\label{sec:2D}
 The construction of $E_h$ is based on the characterizations of trace spaces associated with a
 triangle and
 the construction of polynomial data on the skeleton of $\cT_h$
 that satisfy these characterizations on all the triangles of $\cT_h$.
\subsection{Trace Spaces for a Triangle}\label{subsec:2DLocalSpaces}
  Let $T$ be a triangle with vertices
 $p_1$, $p_2$ and $p_3$, $e_i$ be the edge of $T$ opposite $p_i$, $\bn_i$ be the unit outer normal
 along $e_i$, and $\bm{t}_i$ be the counterclockwise unit tangent of $e_i$.
%
%
  Let $\ell$ be a nonnegative number. A function $u$ belongs to
 the piecewise Sobolev space $H^\ell(\p T)$
  if and only if $u_i$, the restriction of $u$ to $e_i$, belongs to $H^\ell(e_i)$ for $1\leq i\leq 3$.
  It follows from the Sobolev Embedding Theorem \cite[Theorem~4.12]{ADAMS:2003:Sobolev}
  that we can define
  a linear operator $\Tr:H^2(T)\longrightarrow H^\frac32(\p T)\times H^\frac12(\p T)$  by
\begin{equation}\label{eq:Trace}
 \Tr \zeta=(\zeta,\p\zeta/\p n)\big|_{\p T},
\end{equation}
 where the restrictions of $\zeta$ and $\p \zeta/\p n$ are in the sense of trace and defined
 piecewise with respect to the edges.  For the subspace $\ET$ of $H^2(T)$,
   we have $\Tr\ET\subset H^2(\p T)\times H^1(\p T)$.  Our first task is to identify the image of $\ET$.
\begin{definition}\label{def:2DLocalTraceSpace}\rm
  A pair $(f,g)\in \HTO$ belongs to the space $\TSpace$ if and only if the following conditions are satisfied:
\begin{alignat}{3}
  f_{j}(p_i)&=f_{k}(p_i)&\qquad&
   \text{for}\;1\leq i\leq 3\;\text{and}\;j,k\in \{1,2,3\}\setminus\{i\},\label{eq:2DC1}\\
\intertext{and there exist $\bw_1,\bw_2, \bw_3\in \R^2$
 $($which depend on $(f,g))$ such that}
  (\p f_j/\p t_j) (p_i)&=\bw_i\cdot  {\bm t}_j &\qquad&\text{for}\;1\leq i\leq 3\;
  \text{and}\;j\in\{1,2,3\}\setminus\{i\},
  \label{eq:2DC2}\\
   g_j(p_i)&=\bw_i\cdot{\bm n}_j &\qquad&\text{for}\;1\leq i\leq 3\;\text{and}\;
    j\in\{1,2,3\}\setminus\{i\}.\label{eq:2DC3}
\end{alignat}
\end{definition}
%
\par
 Note that the compatibility conditions \eqref{eq:2DC2}--\eqref{eq:2DC3} are equivalent to
\begin{equation}\label{eq:2DCompatibleGradient}
   (\p f_j/\p t_j)\bt_j+ g_j\bn_j=(\p f_k/\p t_k)\bt_k+ g_k\bn_k \qquad\text{at} \quad p_i
\end{equation}
 {for} $1\leq i\leq 3$ and $j,k\in\{1,2,3\}\setminus\{i\}$.
\par
 It follows from the Sobolev Embedding Theorem that $\Tr\zeta\in\TSpace$ for $\zeta\in\ET$, where
 $\bw_i=\nabla\zeta(p_i)$, and we can recover $\nabla\zeta$ on $\p T$
 from $(f,g)=\Tr\zeta$ by
\begin{equation}\label{eq:2DGradient}
 \nabla\zeta=(\p f_i/\p t_i)\bt_i+ g_i\bn_i \qquad\text{on $e_i$ for $1\leq i\leq 3$.}
\end{equation}
 \par
  We want to show that in fact $\Tr\ET=\TSpace$.  For this purpose
  it is useful to construct a linear isomorphism $\Phi^*:\TSpace\longrightarrow\tTSpace$ such that
 \begin{equation}\label{eq:CD}
   \Tr (\zeta\circ\Phi)=\Phi^*(\Tr\zeta) \qquad\forall\,\zeta\in\ET,
 \end{equation}
 where $\Phi$ is an orientation preserving affine transformation that maps the triangle $\tT$ onto $T$.
 We assume that $\Phi$ maps the vertex $\tp_i$ of $\tT$ to the vertex $p_i$ of $T$ and hence
 it also maps the edge $\te_i$ of $\tT$ to the edge $e_i$ of $T$.
 \par
 First we note that, by the chain rule,
\begin{equation}\label{eq:ChainRule}
  \nabla (\zeta\circ\Phi)=J_\Phi^t(\nabla\zeta\circ\Phi) \qquad\forall\,\zeta\in\ET,
\end{equation}
 where $J_\Phi$ (a constant $2\times 2$ matrix with a positive determinant) is the Jacobian of $\Phi$ .
\par
 Let $(f,g)\in\ET$.
 Motivated by \eqref{eq:2DGradient}--\eqref{eq:ChainRule}, we define
$ \Phi^*(f,g)=(\tf,\tg)$,
 where
\begin{equation}\label{eq:PullBack1}
   \tf=f\circ\Phi,
 \end{equation}
  and
\begin{equation}\label{eq:PullBack2}
 \tg=J_\Phi^t(\mfg\circ\Phi)\cdot\tilde\bn_i \qquad\text{on $\;\te_i\;$ for $1\leq i\leq 3$},
\end{equation}
 where $\tilde\bn_i$ is the outward pointing unit normal along the edge $\te_i$ and
 the vector field $\mfg$ on $\p T$ is given by
\begin{equation}\label{eq:ArtifcialGradient}
 \mfg=(\p f_i/\p t_i)\bt_i+ g_i\bn_i \qquad\text{on $\;e_i\;$ for $1\leq i\leq 3$.}
\end{equation}
\par
 It is straightforward to check that $(\tf,\tg)\in\tTSpace$, $\Phi^*:\TSpace\longrightarrow\tTSpace$ is a bijection,
  and that
 \eqref{eq:CD} follows from \eqref{eq:2DGradient} and
 \eqref{eq:ChainRule}--\eqref{eq:ArtifcialGradient}.
 \par
  We are now ready to characterize $\Tr \ET$.
\begin{lemma}\label{lem:2DChracterization}
  The image of $\ET$ under $\Tr$ is the space $\TSpace$.
\end{lemma}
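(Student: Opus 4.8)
The inclusion $\Tr\ET\subseteq\TSpace$ has already been verified in the discussion preceding the lemma, so the plan is to prove the reverse inclusion: every $(f,g)\in\TSpace$ is of the form $\Tr\zeta$ for some $\zeta\in\ET$. By the commuting identity \eqref{eq:CD}, the bijectivity of $\Phi^*:\TSpace\longrightarrow\tTSpace$, and the fact that $\zeta\mapsto\zeta\circ\Phi$ is a linear isomorphism of $\ET$ onto $H^\frac52(\tT)$, the assertion for $T$ is equivalent to the assertion for the reference triangle $\tT$, so one may work on any convenient fixed triangle. (In fact the argument below uses only that the edges of $T$ are straight segments, so it can also be carried out on $T$ directly.) I would proceed in two stages: first subtract a smooth function that carries the vertex data of $(f,g)$, reducing to data that vanishes at the vertices; then solve the resulting homogeneous inverse trace problem by localizing to the individual edges.

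For the first stage, let $(f,g)\in\TSpace$ with associated vectors $\bw_1,\bw_2,\bw_3\in\R^2$ as in Definition~\ref{def:2DLocalTraceSpace}, and choose a polynomial $q$ with $q(p_i)=f_j(p_i)$ (the common vertex value, by \eqref{eq:2DC1}) and $\nabla q(p_i)=\bw_i$ for $1\le i\le 3$; such a $q$ exists, for instance the cubic Hermite interpolant of these nine data together with an arbitrary value at the barycenter, which form a unisolvent set for $P_3(T)$. Since $q\in\ET$, the already-proved inclusion gives $\Tr q\in\TSpace$, and for $(\hat f,\hat g):=(f,g)-\Tr q$ one has $\hat f_i\in H^2(e_i)$, $\hat g_i\in H^1(e_i)$, and a direct computation using \eqref{eq:2DC2}, \eqref{eq:2DC3} and $\nabla q(p_i)=\bw_i$ shows that $\hat f_i$ and $\p\hat f_i/\p t_i$ vanish at both endpoints of $e_i$ and that $\hat g_i$ vanishes at both endpoints of $e_i$. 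By the one-dimensional characterizations of $H^2_0$ and $H^1_0$ on an interval, this means $\hat f_i\in H^2_0(e_i)$ and $\hat g_i\in H^1_0(e_i)$ for $1\le i\le 3$.

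For the second stage, it suffices to prove a homogeneous inverse trace theorem: if $\phi_i\in H^2_0(e_i)$ and $\psi_i\in H^1_0(e_i)$ for $1\le i\le 3$, then there is $\eta\in\ET$ with $\eta|_{e_i}=\phi_i$ and $\p\eta/\p n|_{e_i}=\psi_i$; applying this with $(\phi_i,\psi_i)=(\hat f_i,\hat g_i)$ and setting $\zeta=q+\eta$ then finishes the proof. I would build $\eta$ one edge at a time. Fixing $i$, a rigid motion places $e_i$ on the $x$-axis with $T$ in the upper half-plane; extending $\phi_i$ and $\psi_i$ by zero then produces functions in $H^2(\R)$ and $H^1(\R)$ (this is precisely where $\phi_i\in H^2_0(e_i)$ and $\psi_i\in H^1_0(e_i)$ are used), so the classical inverse trace theorem for the half-plane (constructed, say, via the Fourier transform) yields $u_i\in H^\frac52(\R^2_+)$ whose zeroth- and first-order normal traces on $\{y=0\}$ are these extensions. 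Because $\mathrm{supp}\,\phi_i\cup\mathrm{supp}\,\psi_i$ is a compact subset of the \emph{open} edge $e_i$ (true first for $\phi_i,\psi_i\in C_c^\infty(e_i)$, and in general by density together with the boundedness of the construction and the continuity of $\Tr$), one can select $\chi_i\in C^\infty(\bar T)$ equal to $1$ on a two-dimensional neighborhood of $\mathrm{supp}\,\phi_i\cup\mathrm{supp}\,\psi_i$ and equal to $0$ on a neighborhood of the other two closed edges; then $\chi_i u_i\in\ET$ has trace $\phi_i$ and normal derivative $\psi_i$ on $e_i$ and vanishing trace and normal derivative on the other two edges. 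Setting $\eta:=\sum_{i=1}^3\chi_i u_i$ yields the required function.

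The hard part is the second stage, and in particular the behavior near the vertices of $T$: it is only because $\hat f_i$, $\p\hat f_i/\p t_i$ and $\hat g_i$ vanish at the endpoints of $e_i$ — equivalently, only because of the compatibility conditions \eqref{eq:2DC1}--\eqref{eq:2DCompatibleGradient} combined with the subtraction in the first stage — that the cut-offs $\chi_i$ can be supported away from the corners, so that passing from $u_i$ to $\chi_i u_i$ does not spoil the $H^\frac52(T)$-regularity at a corner. A more streamlined alternative to the hands-on construction in the second stage would be to invoke the inverse trace results assembled in Appendix~\ref{append:ITT}.
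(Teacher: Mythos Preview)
Your first stage --- subtracting a cubic Hermite interpolant so that the residual data satisfies $\hat f_i\in H^2_0(e_i)$ and $\hat g_i\in H^1_0(e_i)$ --- is correct and is a clean way to absorb the vertex compatibility conditions \eqref{eq:2DC1}--\eqref{eq:2DC3}. The gap is in the second stage. Membership in $H^2_0(e_i)$ or $H^1_0(e_i)$ means only that the function (and, for $H^2_0$, its first derivative) vanishes \emph{at} the endpoints; it does \emph{not} imply compact support in the open edge. Your density argument does not rescue this: if $\phi_i^{(n)},\psi_i^{(n)}\in C_c^\infty(e_i)$ approximate $\hat f_i,\hat g_i$ in $H^2\times H^1$, their supports must creep toward the endpoints, so the cut-offs $\chi_i^{(n)}$ are forced to transition from $1$ to $0$ over shrinking neighborhoods of the vertices and $\|\chi_i^{(n)}\|_{C^3}\to\infty$. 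The bound $\|\chi_i^{(n)}u_i^{(n)}\|_{H^{5/2}(T)}\le C(\chi_i^{(n)})\|u_i^{(n)}\|_{H^{5/2}}$ therefore carries no uniform constant, and you cannot pass to the limit; no fixed bounded linear lifting is produced.

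The obstruction is genuinely a corner phenomenon: a smooth cut-off that equals $1$ on all of $e_i$ and $0$ on the other two closed edges cannot exist, because these sets meet at the vertices. The paper deals with exactly this by a partition of unity that separates the two regimes: away from the vertices one uses $L_1$ together with fixed cut-offs, essentially as you propose, while near each vertex one straightens the angle to a right angle via the affine pull-back \eqref{eq:CD} and invokes Lemma~\ref{lem:2DLocalTrace}, the inverse trace theorem for $\R_+^2$. Your polynomial subtraction merely replaces the original compatible corner data by the zero corner data; a corner lifting result of the type in Lemma~\ref{lem:2DLocalTrace} is still required, and the subtraction does not circumvent it.
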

\begin{proof} We already know that $\Tr\ET\subset\TSpace$.  In the other direction,
 we want to construct
  $\zeta\in\ET$ that satisfies \eqref{eq:Trace} for a
 given $(f,g)\in\TSpace$.
\par
   If $f$ and $g$ vanish near the vertices, we can use the
 operator $L_1$ in Lemma~\ref{lem:2DLifting} and cut-off functions to obtain $\zeta$.
 Therefore,  by using a partition of unity, we can reduce the construction to a neighborhood of a vertex
 and, by an affine transformation (cf. \eqref{eq:CD}), we can further assume that
 the angle at the vertex is a right angle.
 The existence of $\zeta$ near such a vertex then follows from Lemma~\ref{lem:2DLocalTrace}.
\end{proof}
%
\subsection{Affine Invariant $\bm{H^2}$ Virtual Element Spaces}\label{subsec:2DVEM}
 The construction of the virtual element spaces involves  polynomial subspaces of $\TSpace$.
\begin{definition}\label{def:BdryPolySpace}\rm Let $T$ be a triangle.
  We will denote the intersection of
   $\TSpace$ and $P_{k}(\p T)\times P_{k-1}(\p T)$ by $\TPoly$.
\end{definition}
\begin{remark}\label{rem:2DTPolyDimension}\rm
  It follows from the compatibility conditions \eqref{eq:2DC1}--\eqref{eq:2DC3} that
  $(f,g)\in \TPoly$ is determined by (i) the values of $f$ at the vertices, (ii)
  the tangential derivatives of $f$ at the vertices, (iii) moments of $f$ on $e_i$ up to order
  $k-4$ that together with (i) and (ii) determine $f_i\in P_k(e_i)$, and (iv) moments of
  $g$ up to order $k-3$ that together with (ii)  (through
  \eqref{eq:2DC3}) determine $g_i\in P_{k-1}(e_i)$.  These degrees of freedom (dofs) are depicted in
  Figure~\ref{fig:2DTSpace} for $k=3$ and $4$, where (i) the values of $f$ at the vertices and
  the moments of $f$ on the edges are
  represented by solid dots, and (ii) the tangential derivatives of $f$ at the vertices and
  the moments of $g$ on the edges are represented by arrows.
  Altogether we have
%
 $ \mathrm{dim} \TPoly=6(k-1)$.
\end{remark}
\begin{figure}[h]
\includegraphics[width=3in]{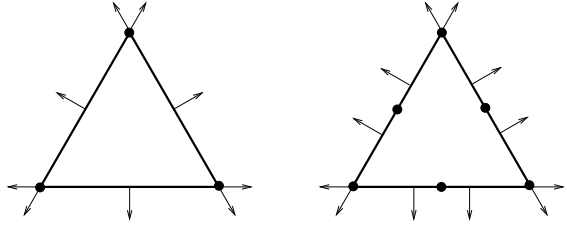}
\caption{Degrees of freedom for $(H^2\times H^1)_{3,2}(\p T)$ and $(H^2\times H^1)_{4,3}(\p T)$}
\label{fig:2DTSpace}
\end{figure}
\begin{remark}\label{rem:2DPolyData}\rm
  Since polynomial spaces are preserved by an affine transformation,
  the map $\Phi^*:\TSpace\longrightarrow \tTSpace$
  defined by \eqref{eq:PullBack1}--\eqref{eq:PullBack2}
  maps $\TPoly$ one-to-one and onto $\tTPoly$.
\end{remark}
\subsubsection{Virtual Element Spaces on the Reference Triangle}\label{subsubsec:2DVEMReference}
 We begin with  a simple well-posedness result for the biharmonic problem.
\begin{lemma}\label{lem:LocalWellPosedness}
 Given any $(f,g)\in \TSpace$ and $\rho\in L_2(T)$, there
 exists a unique $\xi\in H^2(T)$ such that
 \begin{equation}\label{eq:WeakBiharmonic}
  (\Delta\xi,\Delta z)_{L_2(T)}=(\rho,z)_{L_2(T)}\qquad\forall\,z\in H^2_0(T)
  \quad\text{and}\quad  \Tr \xi= (f,g).
 \end{equation}
\end{lemma}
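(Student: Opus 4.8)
The plan is to realize $\xi$ as the solution of a standard biharmonic boundary value problem and to verify that the nonhomogeneous trace data can be accommodated. First I would invoke Lemma~\ref{lem:2DChracterization} to produce a function $\eta\in\ET\subset H^2(T)$ with $\Tr\eta=(f,g)$. Writing $\xi=\eta+z_0$ with $z_0\in H^2_0(T)$, the problem \eqref{eq:WeakBiharmonic} becomes: find $z_0\in H^2_0(T)$ such that
\begin{equation*}
 (\Delta z_0,\Delta z)_{L_2(T)}=(\rho,z)_{L_2(T)}-(\Delta\eta,\Delta z)_{L_2(T)}\qquad\forall\,z\in H^2_0(T).
\end{equation*}
The right-hand side is a bounded linear functional on $H^2_0(T)$ since $\rho\in L_2(T)$ and $\Delta\eta\in L_2(T)$.

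Next I would apply the Lax–Milgram lemma on the Hilbert space $H^2_0(T)$. The bilinear form $a(w,z)=(\Delta w,\Delta z)_{L_2(T)}$ is clearly bounded; coercivity is the content of the a priori estimate $\|\Delta z\|_{L_2(T)}^2\gtrsim |z|_{H^2(T)}^2\gtrsim \|z\|_{H^2(T)}^2$ for $z\in H^2_0(T)$, which follows from the identity $\|\Delta z\|_{L_2(T)}=|z|_{H^2(T)}$ for $z\in H^2_0(\O)$ (integration by parts, valid by density of $C_c^\infty(T)$) together with the Poincaré–Friedrichs inequality on the bounded domain $T$. Lax–Milgram then yields a unique $z_0\in H^2_0(T)$, and hence $\xi=\eta+z_0\in H^2(T)$ solving \eqref{eq:WeakBiharmonic} with $\Tr\xi=\Tr\eta=(f,g)$.

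For uniqueness, suppose $\xi_1,\xi_2\in H^2(T)$ both satisfy \eqref{eq:WeakBiharmonic}. Then $w=\xi_1-\xi_2$ has $\Tr w=0$, so $w\in H^2_0(T)$ (here I would note that $\Tr w=0$ means $w$ and $\p w/\p n$ vanish on $\p T$, which characterizes $H^2_0(T)$), and $(\Delta w,\Delta z)_{L_2(T)}=0$ for all $z\in H^2_0(T)$; taking $z=w$ gives $\|\Delta w\|_{L_2(T)}=0$, whence $w=0$ by the coercivity estimate above. I do not expect any serious obstacle: the only mild subtlety is making sure the trace characterization of $H^2_0(T)$ — namely that $\Tr w=0$ exactly when $w\in H^2_0(T)$ — is available for the polygonal domain $T$, but this is classical.
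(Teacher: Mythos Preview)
Your proposal is correct and follows essentially the same approach as the paper: lift the boundary data via Lemma~\ref{lem:2DChracterization} to obtain a function in $\ET$ with the prescribed trace, then solve for the correction in $H^2_0(T)$ using the coercivity of $(\Delta\cdot,\Delta\cdot)$. The paper's proof is terser---it simply writes down the correction problem and asserts existence---whereas you spell out the Lax--Milgram argument, the coercivity via $\|\Delta z\|_{L_2(T)}=|z|_{H^2(T)}$ on $H^2_0(T)$, and the uniqueness argument separately; but the underlying idea is identical.
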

\begin{proof}
  Let $\zeta\in \ET$ satisfy \eqref{eq:Trace} and $\eta\in H^2_0(T)$ be defined by
\begin{equation*}
  (\Delta\eta,\Delta z)_{L_2(T)}=(\rho-\Delta \zeta,z)_{L_2(T)}\qquad\forall\,z\in H^2_0(T).
\end{equation*}
 Then $\xi=\eta+\zeta$ is the unique solution of (2.6).
\end{proof}
 Let $\hat{T}$ be the reference triangle with vertices $(0,0)$, $(1,0)$ and $(0,1)$.
 In view of Lemma~\ref{lem:LocalWellPosedness} and the fact that $\hatTPoly$ is a subspace of $\hatTSpace$, we can
 now define the reference virtual element spaces $\hatVE$,
  which are identical to the virtual element spaces in \cite{BM:2013:VEM}
  for the special case of the reference triangle.
\begin{definition}\label{def:2DReferenceVEM}\rm
  A function $\hat\xi\in H^2(\hat{T})$ belongs to the
  virtual element space $\hatVE$
  if and only if
 $\Tr\hat\xi\in\hatTPoly$ and the distributional derivative
 $\Delta^2 \hat\xi$ belongs to $P_{k-4}(\hat{T})$, i.e., there exists $\hat\rho\in P_{k-4}(\hat{T})$
 such that
\begin{equation}\label{eq:RefVE}
 (\Delta\hat\xi,\Delta \hat z)_\LTT=(\hat\rho,\hat z)_\LTT \qquad\forall\,\hat z\in H^2_0(\hat{T}).
\end{equation}
\end{definition}
\begin{remark}\label{rem:VEMDimension}\rm
  According to
  Remark~\ref{rem:2DTPolyDimension} and Lemma~\ref{lem:LocalWellPosedness},
  we have
\begin{equation}\label{eq:2DVEMDimension}
  \mathrm{dim}\,\hatVE=\mathrm{dim}\,\hatTPoly
     +\mathrm{dim}\,P_{k-4}(\hat{T})=\frac{k^2+7k-6}{2}.
\end{equation}
\end{remark}
\par
 The following result is well-known
 (cf. \cite{BM:2013:VEM}).  We provide a proof here for self-containedness.
\begin{lemma}\label{lem:BM}
  A function $\hat\xi$ in $\hatVE$ is uniquely determined by
  $\Tr\hat\xi\in\hatTPoly$ and $Q_{\hat{T},k-4}\hat\xi\in P_{k-4}(\hat{T})$.
\end{lemma}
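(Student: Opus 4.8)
The plan is to show that the linear map $\hat\xi\mapsto(\Tr\hat\xi,Q_{\hat T,k-4}\hat\xi)$ from $\hatVE$ into $\hatTPoly\times P_{k-4}(\hat T)$ is injective; since by Remark~\ref{rem:VEMDimension} the dimension of $\hatVE$ equals $\mathrm{dim}\,\hatTPoly+\mathrm{dim}\,P_{k-4}(\hat T)$, injectivity of this map into a space of the same dimension forces it to be a bijection, which is exactly the asserted unisolvence. So the whole lemma reduces to the implication: if $\hat\xi\in\hatVE$ has $\Tr\hat\xi=(0,0)$ and $Q_{\hat T,k-4}\hat\xi=0$, then $\hat\xi=0$.

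First I would use $\Tr\hat\xi=(0,0)$ to conclude $\hat\xi\in H^2_0(\hat T)$. Next, since $\hat\xi\in\hatVE$, there is $\hat\rho\in P_{k-4}(\hat T)$ with $(\Delta\hat\xi,\Delta\hat z)_\LTT=(\hat\rho,\hat z)_\LTT$ for all $\hat z\in H^2_0(\hat T)$; taking the admissible test function $\hat z=\hat\xi$ gives
\begin{equation*}
  \|\Delta\hat\xi\|_\LTT^2=(\hat\rho,\hat\xi)_\LTT=(\hat\rho,Q_{\hat T,k-4}\hat\xi)_\LTT=0,
\end{equation*}
where the middle equality holds because $\hat\rho\in P_{k-4}(\hat T)$ and $Q_{\hat T,k-4}$ is the $L_2$-orthogonal projection onto $P_{k-4}(\hat T)$, and the last equality uses the hypothesis $Q_{\hat T,k-4}\hat\xi=0$. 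Hence $\Delta\hat\xi=0$ in $L_2(\hat T)$. Now $\hat\xi$ is a harmonic function lying in $H^2_0(\hat T)$; testing the identity $\Delta\hat\xi=0$ against $\hat\xi$ after one integration by parts (legitimate since $\hat\xi$ has vanishing Dirichlet and Neumann traces) yields $\|\nabla\hat\xi\|_\LTT^2=0$, so $\hat\xi$ is constant, and the vanishing trace forces $\hat\xi=0$.

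I do not expect any real obstacle here; the only point requiring a little care is the bookkeeping at the start, namely invoking Lemma~\ref{lem:LocalWellPosedness} (well-posedness of the biharmonic problem with prescribed trace data and right-hand side) to be sure that prescribing $\Tr\hat\xi\in\hatTPoly$ and $\hat\rho\in P_{k-4}(\hat T)$ determines a well-defined element of $\hatVE$, so that the counting argument via Remark~\ref{rem:VEMDimension} is valid and the claimed degrees of freedom really do form a basis of the dual space. Once injectivity and the dimension count are in hand, the conclusion is immediate.
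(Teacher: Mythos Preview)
Your proposal is correct and follows essentially the same approach as the paper: reduce to injectivity via the dimension count in Remark~\ref{rem:VEMDimension}, test the weak biharmonic identity against $\hat\xi\in H^2_0(\hat T)$ to obtain $\|\Delta\hat\xi\|_\LTT^2=(\hat\rho,\hat\xi)_\LTT=0$ from $Q_{\hat T,k-4}\hat\xi=0$, and conclude that a harmonic function with vanishing Dirichlet trace is zero. The paper phrases the middle step as an integration by parts $(\Delta\hat\xi,\Delta\hat\xi)_\LTT=(\Delta^2\hat\xi,\hat\xi)_\LTT$, but this is the same computation.
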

\begin{proof} In view of Remark~\ref{rem:VEMDimension}, it suffices to show that $\hat\xi=0$ is
 the only function in $\hatVE$ with the properties that
  $\Tr\hat\xi=(0,0)$ and $Q_{\hat{T},k-4}\hat\xi=0$.  Indeed, using
  integration by parts and the fact that the distributional derivative
  $\Delta^2\hat\xi\in P_{k-4}(\hat{T})$, we have
\begin{equation*}
  (\Delta \hat\xi,\Delta \hat\xi)_\LTT=(\Delta^2\hat\xi,\hat\xi)_\LTT=0.
\end{equation*}
 Therefore $\hat\xi\in H^2(\hat{T})$ is a harmonic function that vanishes on
 $\p \hat{T}$ and hence $\hat\xi=0$.
\end{proof}
\begin{remark}\label{rem:ShortCut}\rm
 The definition of the virtual element space $\hatVE$ relies on the fact that
 $\hatTPoly$ is a subspace of $\Tr H^\frac52(\hat{T})\subset \Tr H^2(\hat{T})$.
  One can show by using macro elements of order $k$
  that a pair $(\hat{f},\hat{g})\in P_k(\p \hat{T})\times P_{k-1}(\p \hat{T})$ satisfying the compatibility
  conditions \eqref{eq:2DC1}--\eqref{eq:2DC3} automatically belongs to $\Tr H^2(\hat{T})$.  Hence
  Lemma~\ref{lem:2DChracterization} is not necessary for the definition of the virtual element
  space $\hatVE$ in two dimensions.  However, the definition of the virtual element spaces in three dimensions
  requires the characterization of the trace of $H^\frac52(\hat{T})$ for the reference tetrahedron $\hat{T}$, since
  macro elements of arbitrary order are not available.  The approach
  here provides a preview of the three dimensional case.
\end{remark}

\subsubsection{Virtual Element Spaces for a General Triangle}\label{subsubsec:2DVEMGeneral}
 We now define $\VE$ for an arbitrary triangle $T$ in terms of $\hatVE$.
\begin{definition}\label{def:2DVEM}\rm
 Let $T$ be an arbitrary triangle and
 $\Phi$ be an orientation preserving affine transformation that maps $\hat{T}$ onto $T$.
 Then $\xi\in \VE$ if and only if $\xi\circ\Phi\in\hatVE$.
\end{definition}
\begin{remark}\label{rem:PolynomialSubspace}\rm The definition of $\VE$ is independent of the choice of $\Phi$.
  The polynomial space $P_{k}(T)$ is a subspace of $\VE$ since $P_k(\hat T)$ is obviously a subspace
  of $\hatVE$.  The dimension of $\VE$ is also given by the formula in \eqref{eq:2DVEMDimension}.
\end{remark}
\par
  We have an analog of Lemma~\ref{lem:BM}.
\begin{lemma}\label{lem:GeneralBM}
   A function $\xi$ in $\VE$ is uniquely determined by
  $\Tr\xi\in\TPoly$ and $Q_{T,k-4}\xi\in P_{k-4}(T)$.
\end{lemma}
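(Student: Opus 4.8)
The plan is to transport the problem to the reference triangle and then apply Lemma~\ref{lem:BM}. Fix, as in Definition~\ref{def:2DVEM}, an orientation preserving affine map $\Phi$ taking $\hat T$ onto $T$, so that $\xi\in\VE$ exactly when $\xi\circ\Phi\in\hatVE$. The first thing to record is that the commutation relation \eqref{eq:CD}, $\Tr(\xi\circ\Phi)=\Phi^*(\Tr\xi)$, is valid for every $\xi\in H^2(T)$, not just for $\xi\in\ET$: its verification rests only on the chain rule \eqref{eq:ChainRule}, which holds in $H^2(T)$, and on the elementary fact that on each edge $e_i$ the tangential component (along $\bt_i$) of the trace of $\nabla\xi$ is the tangential derivative of the trace of $\xi$, while the normal component (along $\bn_i$) is the trace of $\p\xi/\p n$ --- so the auxiliary field $\mfg$ of \eqref{eq:ArtifcialGradient} is just the boundary trace of $\nabla\xi$. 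Hence \eqref{eq:CD} holds for $\xi\in\VE$; since $\Phi^*$ maps $\TPoly$ bijectively onto $\hatTPoly$ (Remark~\ref{rem:2DPolyData}, with $\hat T$ in the role of $\tT$) and $\Tr(\xi\circ\Phi)\in\hatTPoly$ by the definition of $\hatVE$, we get $\Tr\xi=(\Phi^*)^{-1}\Tr(\xi\circ\Phi)\in\TPoly$. Thus the linear map $\Lambda\xi=\bigl(\Tr\xi,\,Q_{T,k-4}\xi\bigr)$ does carry $\VE$ into $\TPoly\times P_{k-4}(T)$. I would also record the companion identity $Q_{\hat T,k-4}(w\circ\Phi)=(Q_{T,k-4}w)\circ\Phi$ for $w\in L_2(T)$: it follows by a change of variables, since the determinant of $J_\Phi$ is a positive constant, so pullback along $\Phi$ rescales the $L_2$ inner product by a fixed factor, and $q\mapsto q\circ\Phi^{-1}$ is a bijection of $P_{k-4}(\hat T)$ onto $P_{k-4}(T)$.

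With these two facts the injectivity of $\Lambda$ is immediate. Suppose $\xi\in\VE$ satisfies $\Tr\xi=(0,0)$ and $Q_{T,k-4}\xi=0$, and put $\hat\xi=\xi\circ\Phi\in\hatVE$. By the commutation relation and linearity of $\Phi^*$ we get $\Tr\hat\xi=\Phi^*(0,0)=(0,0)$, and by the companion identity $Q_{\hat T,k-4}\hat\xi=(Q_{T,k-4}\xi)\circ\Phi=0$. Lemma~\ref{lem:BM} then forces $\hat\xi=0$, so $\xi=0$.

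To pass from injectivity to the full statement I would finish with a dimension count. By Remark~\ref{rem:PolynomialSubspace}, $\dim\VE=\dim\hatVE$, and by Remark~\ref{rem:VEMDimension} the latter equals $\dim\hatTPoly+\dim P_{k-4}(\hat T)$. Since $\dim\TPoly=6(k-1)=\dim\hatTPoly$ by Remark~\ref{rem:2DTPolyDimension} and dimensions of polynomial spaces are affine invariant, this gives $\dim\VE=\dim\TPoly+\dim P_{k-4}(T)$. Therefore $\Lambda$ is an injective linear map between finite-dimensional spaces of equal dimension, hence a bijection --- exactly the assertion of the lemma. I expect the one point needing care to be the extension of \eqref{eq:CD}, equivalently the inclusion $\Tr\xi\in\TPoly$, from $\ET$ to $\VE$, since a typical element of $\VE$ lies in $H^2(T)$ but not necessarily in $\ET$; everything else is routine transfer of structure along $\Phi$.
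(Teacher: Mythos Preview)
Your proof is correct and follows essentially the same route as the paper's one-line argument, which cites \eqref{eq:CD}, Remark~\ref{rem:2DPolyData}, Lemma~\ref{lem:BM}, and the identity $Q_{\hat T,k-4}(\xi\circ\Phi)=(Q_{T,k-4}\xi)\circ\Phi$. You supply more detail, and in particular your explicit justification that \eqref{eq:CD} extends from $\ET$ to all of $\VE\subset H^2(T)$ addresses a point the paper leaves implicit.
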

\begin{proof} This is a direct consequence of \eqref{eq:CD}, Remark~\ref{rem:2DPolyData},
 Lemma~\ref{lem:BM} and the relation
    $Q_{\hat{T},k-4}(\xi\circ\Phi)=(Q_{T,k-4}\xi)\circ\Phi$.
\end{proof}
\begin{remark}\label{rem:2DInvariant}\rm
  Our definition of $\VE$, which is invariant under affine transformations,
   differs from the one in \cite{BM:2013:VEM}  for a general triangle.  The affine invariance simplifies
   the proofs of \eqref{eq:EhError} and \eqref{eq:EhPihError} in Section~\ref{subsec:2DEh}.
  We note that it is also possible to use the virtual finite element spaces from \cite{BM:2013:VEM}
   in the construction of $E_h$.
  But then the proofs of  \eqref{eq:EhError} and \eqref{eq:EhPihError} will become more involved.
\end{remark}
\begin{remark}\label{rem:2DVEM}\rm
  The definition of $H^2$ virtual element spaces on polygons
   and their applications to the plate bending problem can be found in  \cite{BM:2013:VEM,CM:2016:VEM}.
\end{remark}
%
\subsection{Construction on the Skeleton ${\G}={\bigcup}_{T\in\cT_h}{\p T}$}
\label{subsec:2DSkeleton}
 Given $v\in V_h$,
 our goal is to define $\fT$ representing (the desired) $E_hv\big|_{\p T}$ and $\gT$ representing
 (the desired) $(\p E_hv/\p n)\big|_{\p T}$ for all $T\in\cT_h$, such that
\begin{equation}\label{eq:2DTrace1}
  \text{$(\fT,\gT)\in\TPoly$ for all $T\in \cT_h$},
\end{equation}
 and the following conditions are satisfied:
\begin{align}
  &\text{if $T_1,T_2\in\cT_p$ (resp., $T_1,T_2\in\cT_e$), then
   $f_{v,\scriptscriptstyle T_1}(p)=f_{v,\scriptscriptstyle T_2}(p)$ (resp., $f_{v,\scriptscriptstyle T_1}=f_{v,\scriptscriptstyle T_2}$ on $e$),}
  \label{eq:2DTrace2}\\
  &\text{if two distinct  $T_1$ and $T_2$ belong to $\cT_e$,  then
   $g_{v,\scriptscriptstyle T_1}+g_{v,\scriptscriptstyle T_2}=0$ on $e$,}\label{eq:2DTrace3}\\
     &\text{if $e\in\cE_h^b$ is an edge of $T$ and $v\in H^1_0(\O)$, then $\fT=0$ on $e$.}
   \label{eq:2DTrace4}
\end{align}
 Note that \eqref{eq:2DTrace2} and \eqref{eq:2DTrace3} imply
 any piecewise $H^2$ function $\xi$ satisfying
 $(\xi_T,\p \xi_T/\p n)\big|_{\p T}=(\fT,\gT)$ for all $T\in\cT_h$ will belong to
 $H^2(\O)$, and \eqref{eq:2DTrace4} implies that $\xi\in H^2(\O)\cap H^1_0(\O)$ if $v\in H^1_0(\O)$.
\subsubsection{Construction at the Vertices}\label{subsubsec:2DVertices}
  In view of the compatibility conditions \eqref{eq:2DC2} and \eqref{eq:2DC3}, we need to
  define vectors $\bw_p\in\R^2$ associated with the vertices $p$ of $\cT_h$.
 There are three cases: (i) $p$ is an interior vertex,
 (ii) $p$ is boundary vertex that is not a corner of $\O$ and
 (iii) $p$ is a corner of $\O$.
\par\medskip\noindent
{\bf Case\sp(i)}
  For an interior vertex $p$, we define $\bw_p$ to be $\nabla \vT$, where
 $T$ is any triangle in $\cT_p$.
\par\medskip\noindent
{\bf Case\sp(ii)}
 For a boundary vertex $p$ that is not a corner of $\O$, we
 define $\bw_p$ to be $\nabla\vT$, where $T$ is one of the triangles in $\cT_p$ that
 has an edge on $\p\O$.  This choice ensures that $\bw_p\cdot\bm{t}=0$ if $v\in H^1_0(\O)$, where
 $\bm{t}$ is any vector  tangential to $\p\O$ at $p$.
\par\medskip\noindent
{\bf Case\sp(iii)}
 At a corner $p$ of $\O$, we define $\bw_p$  by
\begin{equation}\label{eq:GradCorner}
  \bw_p\cdot \bm{t}_i= (\p v/\p t_i)(p) \qquad\text{for $i=1,2$},
\end{equation}
 where $e_1, e_2\in\cE_h^b$ are the two edges emanating from $p$ and
 $\p/\p t_i$ is the derivative in the direction of the unit tangent $\bm{t}_i$ of $e_i$.
 Note that $\bw_p=0$
 at a corner $p$ of $\O$ if $v\in H^1_0(\O)$.
\par\medskip
 The choices of the triangles and tangent vectors in Case(i)--Case(iii) are illustrated in
 Figure~\ref{fig:2DSkeleton}.
\begin{figure}[h]
\includegraphics[height=1.5in]{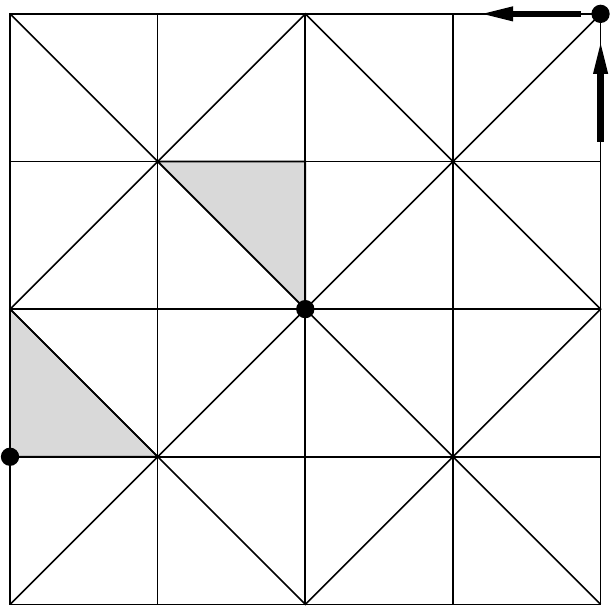}
\caption{Triangles and tangent vectors in the definition of $\bw_p$}
\label{fig:2DSkeleton}
\end{figure}
\begin{remark}\label{rem:C1Vertex}\rm
  If the condition
\begin{equation}\label{eq:C1Vertex}
   \nabla v_{\sss T_1}(p)=\nabla v_{\sss T_2}(p)\qquad\forall\,T_1,T_2\in\cT_p
\end{equation}
 is satisfied at a vertex $p$, then obviously $\bw_p=\nabla \vT(p)$ for all $T\in\cT_p$.
\end{remark}
\subsubsection{Construction on the Edges}\label{subsubsec:2DEdges}
\par
 On any edge $e\in\cE_h$, we define a polynomial $g_e\in P_{k-1}(e)$ as follows:
 First we choose $T\in\cT_e$ and then we specify that
\begin{align}
  &\text{$g_e(p)=\bw_p\cdot\bn_{e,\sss T}$ at an endpoint $p$ of $e$}.\label{eq:ge1}\\
  &\text{$g_e$ and $\p\vT/\p n$ have the same moments up to order $k-3$ on $e$.}\label{eq:ge2}
\end{align}
%
%
\subsubsection{Construction on the Triangles}\label{subsubsec:2DTriangles}
  We are now ready to define $(\fT,\gT)\in H^2(\p T)\times H^1(\p T)$ for any $T$ as follows.
 Given any edge $e$ of $T$, the function $\fT$ on $e$ is the unique polynomial in $P_{k}(e)$
 with the following properties:
\begin{align}
 &\text{$\fT$ agrees with $v$ at the two endpoints of $e$ and shares the same moments up to}
   \label{eq:2DSkeleton1}\\
 &\text{order $k-4$ with $v$,}\notag\\
 &\text{the directional derivative of $\fT$ at an endpoint $p$ of $e$ in the direction of the}
 \label{eq:2DSkeleton2}\\
   &\text{tangent $\bm{t}_e$ of $e$
   is given by $\bw_p\cdot\bm{t}_e$.}\notag
\end{align}
\begin{remark}\label{rem:vTEdge}\rm
  If the condition \eqref{eq:C1Vertex}
  is satisfied at both endpoints of $e$, then $\bw_{p}=\nabla\vT(p)$ at the two endpoints $p$ of
  $e$ by Remark~\ref{rem:C1Vertex} and then conditions \eqref{eq:2DSkeleton1}
  and \eqref{eq:2DSkeleton2} imply $\fT=v$ on $e$.
\end{remark}
\par
 Given any edge $e$ of $T$, we define
\begin{align}\label{eq:2DSkeleton3}
  &\text{$\gT=g_e$ if $T$ is the triangle chosen in the definition of $g_e$
  (cf. Section~\ref{subsubsec:2DEdges}),}\\
   &\text{otherwise $\gT=-g_e$.}\notag
\end{align}
\begin{remark}\label{rem:gTEdge}\rm
 If the condition \eqref{eq:C1Vertex} is satisfied at both endpoints of $e$ and $v$ is $C^1$
  across $e$, then Remark~\ref{rem:C1Vertex} and \eqref{eq:ge1}--\eqref{eq:ge2}
   imply that $\gT=\p\vT/\p n$ on
  $e$.
\end{remark}
\par
 By construction, the condition \eqref{eq:2DTrace1} is satisfied because
 the compatibility conditions \eqref{eq:2DC1}--\eqref{eq:2DC3} follow from \eqref{eq:ge1} and
 \eqref{eq:2DSkeleton1}--\eqref{eq:2DSkeleton2}.  The condition \eqref{eq:2DTrace2} follows from
 \eqref{eq:2DSkeleton1}--\eqref{eq:2DSkeleton2} and the condition \eqref{eq:2DTrace3} follows
 from \eqref{eq:2DSkeleton3}.  The choices we make in the definition of $\bw_p$ for $p\in\p\O$
 (cf. Case (ii) and Case (iii) in Section~\ref{subsubsec:2DVertices} and
 \eqref{eq:2DSkeleton1}--\eqref{eq:2DSkeleton2})
 also implies \eqref{eq:2DTrace4}.
%
\subsection{The Operator ${E}_{h}$}\label{subsec:2DEh}
 Let $v\in V_h$ and $T\in\cT_h$ be arbitrary, and $(\fT,\gT)\in \TPoly$  be
 the function pair constructed in
 Section~\ref{subsec:2DSkeleton}.  We define $E_h v\in\VE$ by the
 following conditions (cf. Lemma~\ref{lem:GeneralBM}):
\begin{equation}\label{eq:Eh}
  (E_hv,\p E_h v/\p n)=(\fT,\gT) \quad\text{on $\p T$} \quad
  \text{and} \quad Q_{T,k-4}(E_hv)=Q_{T,k-4}(v).
\end{equation}
\par
 It follows from \eqref{eq:2DTrace2}--\eqref{eq:2DTrace3} that the piecewise $H^2$ function
 $E_hv$ belongs to $H^2(\O)$, and \eqref{eq:2DTrace4} implies $E_hv\in H^1_0(\O)$ if
 $v\in H^1_0(\O)$. It only remains to establish the estimates
 \eqref{eq:EhError} and \eqref{eq:EhPihError}.
\par
 Note that
 Remark~\ref{rem:vTEdge} and Remark~\ref{rem:gTEdge} imply
\begin{align}\label{eq:C1Invariance}
  &\text{$(\fT,\gT)=(\vT,\p \vT/\p n)$ on $\p T$ if $v$ is $C^1$ on $\p T$,}
\end{align}
 and hence $v=E_hv$ if $v$ is $C^1$ on $\p T$, which is the rationale
 behind  \eqref{eq:EhError} and \eqref{eq:EhPihError}.
\begin{theorem}\label{thm:2DEhError}
 The estimate \eqref{eq:EhError} holds with a positive constant $C_\sharp$ that only depends on $k$ and the
 shape regularity of $\cT_h$.
\end{theorem}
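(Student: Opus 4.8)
The plan is to estimate $\|v-E_hv\|_h$ by working locally on each triangle $T\in\cT_h$ and then summing. The crucial structural fact, already noted in \eqref{eq:C1Invariance}, is that $E_hv=v$ on any $T$ where $v$ is $C^1$ on $\p T$; consequently the difference $v-E_hv$ on $T$ is controlled entirely by how far $v$ is from being $C^1$ across the edges of $T$, i.e. by the jump quantities $\|\jump{\p v/\p n}\|_{L_2(e)}$ for the edges $e$ of $T$ together with the gradient mismatches $|\nabla v_{T_1}(p)-\nabla v_{T_2}(p)|$ at the vertices $p$ of $T$. The first step is therefore to pass to the reference triangle $\hat T$ via the affine map $\Phi$ of Definition~\ref{def:2DVEM}: since $E_h$ is affine invariant, $(v-E_hv)\circ\Phi = \hat v - E_h\hat v|_{\hat T}$ in the obvious notation, and a standard scaling argument reduces $|v-E_hv|_{H^2(T)}$ and the jump terms on $\p T$ to the corresponding quantities on $\hat T$, with constants depending only on the shape regularity of $T$.

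On the reference triangle the second step is to exhibit, for each edge $\hat e$ of $\hat T$, a bounded linear right inverse of the relevant trace-type map and to bound $\hat v-E_h\hat v$ in $H^2(\hat T)$ by the data that defines the pair $(\hat f_{\hat v,\hat T}-\Tr\hat v,\hat g_{\hat v,\hat T}-\p\hat v/\p n)$ on $\p\hat T$ together with the (vanishing) interior moment mismatch $Q_{\hat T,k-4}(\hat v-E_h\hat v)=0$. Using Lemma~\ref{lem:GeneralBM} (its reference-triangle version Lemma~\ref{lem:BM}) and the well-posedness Lemma~\ref{lem:LocalWellPosedness}, the map sending the boundary data and interior moments to the virtual element function is a linear isomorphism between finite-dimensional spaces, so on $\hat T$ one has
\begin{equation*}
 |\hat v-E_h\hat v|_{H^2(\hat T)}\leq C\Big(\sum_{\hat e}\|\hat f_{\hat v,\hat T}-\hat v\|_{H^2(\hat e)}+\|\hat g_{\hat v,\hat T}-\p\hat v/\p n\|_{H^1(\hat e)}\Big),
\end{equation*}
with $C$ depending only on $k$. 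It then remains to bound the right-hand side. By the construction in Section~\ref{subsec:2DSkeleton}, on each edge $\hat e$ the polynomial $\hat f_{\hat v,\hat T}$ matches $\hat v$ in its vertex values and interior moments up to order $k-4$, so $\hat f_{\hat v,\hat T}-\hat v$ on $\hat e$ is a polynomial of degree $k$ determined solely by the two tangential-derivative mismatches $(\p\hat f_{\hat v,\hat T}/\p\hat t)(p)-(\p\hat v/\p\hat t)(p)=(\bw_p-\nabla\hat v(p))\cdot\hat\bt_e$ at the endpoints; similarly $\hat g_{\hat v,\hat T}-\p\hat v/\p n$ on $\hat e$ has zero moments up to order $k-3$ and is pinned only by the endpoint mismatches $(\bw_p-\nabla\hat v(p))\cdot\hat\bn_{e}$. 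By equivalence of norms on finite-dimensional spaces these are bounded by $\sum_{p\in\hat e}|\bw_p-\nabla\hat v(p)|$.

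The third step is to bound each vertex mismatch $|\bw_p-\nabla v_T(p)|$ (back on $T$, after rescaling) by the jump data $\sqrt{J(v,v)}$. By the three cases in Section~\ref{subsubsec:2DVertices}, $\bw_p$ equals $\nabla v_{T'}(p)$ for some fixed $T'\in\cT_p$ (or is defined through boundary tangents at a corner, in which case one argues using the boundary edges and again reduces to gradient jumps along a chain of triangles around $p$). Thus $\bw_p-\nabla v_T(p)$ is a sum, over the triangles in a chain from $T'$ to $T$ inside $\cT_p$, of consecutive gradient jumps $\nabla v_{T_{i+1}}(p)-\nabla v_{T_i}(p)$ at $p$. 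Across a shared edge $e$ the tangential components of these two gradients agree (both equal $(\p v/\p t_e)(p)$ since $v$ is continuous), so the only discrepancy is in the normal component, which equals $\jump{\p v/\p n}(p)$. An inverse estimate on the edge polynomial $\jump{\p v/\p n}|_e\in P_{k-1}(e)$ gives the pointwise value bound $|\jump{\p v/\p n}(p)|\leq C h_e^{-1/2}\|\jump{\p v/\p n}\|_{L_2(e)}$, and the number of triangles in the chain is bounded by the shape regularity. Summing these contributions over all vertices and edges of $T$, then over all $T\in\cT_h$ (each edge and vertex appearing a bounded number of times), yields $\sum_T|v-E_hv|_{H^2(T)}^2\leq C_\sharp^2\,J(v,v)$. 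Finally the remaining piece of $\|v-E_hv\|_h^2$, namely $J(v-E_hv,v-E_hv)$, is bounded the same way: since $E_hv\in H^2(\O)$ its normal-derivative jumps vanish, so $J(v-E_hv,v-E_hv)=J(v,v)$, which is even an equality. Collecting the two contributions gives \eqref{eq:EhError}.

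The main obstacle I expect is the corner case (Case (iii)): there $\bw_p$ is defined through the two boundary-edge tangential derivatives of $v$ rather than as some $\nabla v_{T'}(p)$, so the chain argument linking $\bw_p$ to $\nabla v_T(p)$ must be set up carefully to express the mismatch purely in terms of interior-edge normal-derivative jumps — one has to check that the boundary tangential data are automatically consistent (they are, by continuity of $v$) and that walking from a boundary triangle around the corner to $T$ only ever costs normal-jump terms. The other point requiring care, though routine, is making the scaling between $T$ and $\hat T$ in the fractional/integer trace norms clean enough that all constants end up depending only on $k$ and shape regularity; this is where the affine invariance of $\VE$ (Remark~\ref{rem:2DInvariant}) does the essential work.
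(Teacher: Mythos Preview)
Your approach is essentially the paper's: both pivot on the affine-invariant degrees-of-freedom norm on $\VE$ (you via the reference triangle and an $H^2$ bound, the paper via the scaled $L_2$ equivalence \eqref{eq:LTwoNorm} followed by an inverse estimate), reduce the discrepancy to vertex-gradient mismatches and edge normal-derivative data, bound the vertex mismatches by a chain argument through $\cT_p$ using an inverse inequality on the jump polynomial, and finish with $J(v-E_hv,v-E_hv)=J(v,v)$.

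One detail needs correcting. You assert that $\hat g_{\hat v,\hat T}-\partial\hat v/\partial n$ has zero moments up to order $k-3$ on each edge $\hat e$, so that only the endpoint mismatches survive. By \eqref{eq:ge2} and \eqref{eq:2DSkeleton3} this is true only when $T$ is the triangle \emph{chosen} in the definition of $g_e$; for the other triangle $T'\in\cT_e$ one has $g_{v,\sss T'}=-g_e$ and the moments of $g_{v,\sss T'}-\partial v_{\sss T'}/\partial n$ up to order $k-3$ equal (up to sign) those of $\jump{\partial v/\partial n}$. This is exactly the extra contribution $\|Q_{e,k-3}\,\partial(v-E_hv)/\partial n\|_{L_2(e)}^2\le\|\jump{\partial v/\partial n}\|_{L_2(e)}^2$ that appears in the paper's \eqref{eq:2DLocalEhError1}. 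With this term restored your argument goes through unchanged.
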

\begin{proof} All the constants (explicit or hidden) that appear
 below will only depend on the minimum angle of $\cT_h$.
\par
 Let $T\in\cT_h$ be arbitrary.
 In view of Remark~\ref{rem:2DTPolyDimension}, Lemma~\ref{lem:GeneralBM} and the equivalence of
  norms on finite dimensional vector spaces, we have, by scaling,
\begin{align}\label{eq:LTwoNorm}
   \|\xi\|_{L_2(T)}^2&\approx \|Q_{T,k-4}\xi\|_{L_2(T)}^2+
    \sum_{e\in\cE_T}\big[\hT\|Q_{e,k-4}\xi\|_{L_2(e)}^2+\hT^3\|Q_{e,k-3}(\p\xi/\p n)\|_{L_2(e)}^2\big]\\
       &\hspace{40pt}+\sum_{p\in\cV_T}\big[\hT^2 \xi^2(p)+ \hT^4|\nabla\xi(p)|^2\big]
       \hspace{50pt}\forall\,\xi\in\VE,\notag
\end{align}
 where $\hT$ is the diameter of $T$ and $\cV_T$ (resp., $\cE_T$) is the set of the three vertices (resp., edges) of $T$.
    Moreover the affine invariance of
 $\VE$ (cf. Definition~\ref{def:2DVEM}) together with \eqref{eq:PullBack2} and \eqref{eq:ArtifcialGradient}
  implies that the hidden constants in \eqref{eq:LTwoNorm} only depend on the
 shape regularity of $T$.
\par
  It follows from \eqref{eq:2DSkeleton1}, \eqref{eq:Eh}  and
 \eqref{eq:LTwoNorm} that
\begin{equation}\label{eq:2DLocalEhError1}
  \|v-E_hv\|_{L_2(T)}^2\approx \sum_{p\in\cV_T}\hT^4|\nabla(v-E_hv)(p)|^2
     +\sum_{e\in\cE_T}\hT^3\|Q_{e,k-3}\p(v-E_hv)/\p n\|_{L_2(e)}^2,
\end{equation}
 and we also have, by the construction of $\bw_p$ in
  Section~\ref{subsubsec:2DVertices}, \eqref{eq:ge2}, \eqref{eq:2DSkeleton2},
  and \eqref{eq:2DSkeleton3},
\begin{align*}
  |\nabla(v-E_hv)(p)|^2&\leq C_1\sum_{e\in\cE_p}h_e^{-1}\|\jump{\p v/\p n}\|_{L_2(e)}^2,\\
  \|Q_{e,k-3}\p(v-E_hv)/\p n\|_{L_2(e)}^2&\leq \|\jump{\p v/\p n}\|_{L_2(e)}^2,
\end{align*}
 where $\cE_p$  is the set of all the edges in $\cE_h$ that share $p$ as a common vertex, and hence
\begin{equation}\label{eq:2DEhLocalError2}
  \|v-E_hv\|_{L_2(T)}^2\leq C_2 \hT^3\sum_{p\in\cV_T}\sum_{e\in\cE_p}\|\jump{\p v/\p n}\|_{L_2(e)}^2.
\end{equation}
\par
 We then deduce from \eqref{eq:2DEhLocalError2} and scaling that
\begin{equation}\label{eq:2DEhLocalError3}
   \|D^2(v-E_hv)\|_{L_2(T)}^2\leq C_3 \hT^{-1}\sum_{p\in\cV_T}\sum_{e\in\cE_p}\|\jump{\p v/\p n}\|_{L_2(e)}^2.
\end{equation}
 Note that, because of the affine invariance of $\VE$, the scaling constants behind
 \eqref{eq:2DEhLocalError3} only depend on the shape regularity of $T$.
\par
 The estimate \eqref{eq:EhError} follows immediately from \eqref{eq:hNorm} and
 \eqref{eq:2DEhLocalError3}.
\end{proof}
\begin{theorem}\label{thm:2DEhPihError}
  The estimate \eqref{eq:EhPihError} holds with a positive constant $C_\flat$ that only
  depends on $k$ and the shape regularity of $\cT_h$.
\end{theorem}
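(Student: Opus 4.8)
I plan to prove \eqref{eq:EhPihError} by a localization-and-scaling argument that converts the consistency property \eqref{eq:C1Invariance} --- that $E_h$ reproduces any $v\in V_h$ which is $C^1$ across the edges of $\cT_h$ --- into the optimal rate $O(h^{k+1})$. The mechanism is that $\zeta\in H^{k+1}(\O)\subset C^1(\bar\O)$ (recall $k\geq 3$), so $\Pi_h\zeta$ fails to be $C^1$-conforming only up to the Lagrange interpolation error. Throughout, $C$ denotes a generic constant depending only on $k$ and the shape regularity of $\cT_h$, $h=\max_{T\in\cT_h}\hT$, and for $T\in\cT_h$ I write $\omega_T$ for the union of the triangles in $\cT_h$ sharing a vertex with $T$ and $\omega_e$ for the union of the triangles sharing the edge $e$; by shape regularity these patches have bounded cardinality, diameters comparable to $\hT$ and $h_e$ respectively, and $\{\omega_T\}_{T\in\cT_h}$ has finite overlap.

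I would fix $T\in\cT_h$, set $v=\Pi_h\zeta\in V_h$, and split $\zeta-E_hv=(\zeta-v)+(v-E_hv)$. The first term is handled by the classical estimate $|\zeta-v|_{H^\ell(T)}\leq C\,\hT^{k+1-\ell}|\zeta|_{H^{k+1}(T)}$ for $\ell=0,1,2$. For the second term I would reuse the two local bounds already established in the proof of Theorem~\ref{thm:2DEhError}, namely \eqref{eq:2DEhLocalError2} and \eqref{eq:2DEhLocalError3}, which control $\|v-E_hv\|_{L_2(T)}$ and $\|D^2(v-E_hv)\|_{L_2(T)}$ by, respectively, $\hT^3$ and $\hT^{-1}$ times the local jump energy $\sum_{p\in\cV_T}\sum_{e\in\cE_p}\|\jump{\p v/\p n}\|_{L_2(e)}^2$ over the edges meeting $T$.

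The heart of the argument is the estimate of these jumps. Since $\zeta\in C^1(\bar\O)$ we have $\jump{\p\zeta/\p n}=0$ on every interior edge, hence $\jump{\p v/\p n}=\jump{\p(v-\zeta)/\p n}$ there. Using the scaled multiplicative trace inequality $\|\phi\|_{L_2(e)}^2\leq C(\hT^{-1}\|\phi\|_{L_2(T)}^2+\hT|\phi|_{H^1(T)}^2)$ on each triangle containing $e$, with $\phi=\p(v-\zeta)/\p n$ (so that $\|\phi\|_{L_2(T)}\leq\|\nabla(v-\zeta)\|_{L_2(T)}$ and $|\phi|_{H^1(T)}\leq|v-\zeta|_{H^2(T)}$), and inserting the interpolation estimates $\|\nabla(v-\zeta)\|_{L_2(T)}\leq C\,\hT^{k}|\zeta|_{H^{k+1}(T)}$ and $|v-\zeta|_{H^2(T)}\leq C\,\hT^{k-1}|\zeta|_{H^{k+1}(T)}$, I obtain $\|\jump{\p v/\p n}\|_{L_2(e)}^2\leq C\,h_e^{2k-1}|\zeta|_{H^{k+1}(\omega_e)}^2$. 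Substituting into \eqref{eq:2DEhLocalError2}--\eqref{eq:2DEhLocalError3} gives $\|v-E_hv\|_{L_2(T)}\leq C\,\hT^{k+1}|\zeta|_{H^{k+1}(\omega_T)}$ and $|v-E_hv|_{H^2(T)}\leq C\,\hT^{k-1}|\zeta|_{H^{k+1}(\omega_T)}$, and combining with the interpolation bound for $\zeta-v$ yields
\[
 \|\zeta-E_hv\|_{L_2(T)}\leq C\,\hT^{k+1}|\zeta|_{H^{k+1}(\omega_T)},\qquad
 |\zeta-E_hv|_{H^2(T)}\leq C\,\hT^{k-1}|\zeta|_{H^{k+1}(\omega_T)}.
\]
For the intermediate order I would not split again but apply the scaled interpolation inequality $|w|_{H^1(T)}^2\leq C(\hT^{-2}\|w\|_{L_2(T)}^2+\hT^2|w|_{H^2(T)}^2)$ with $w=\zeta-E_hv\in H^2(T)$, which combines the two displayed bounds into $|\zeta-E_hv|_{H^1(T)}\leq C\,\hT^{k}|\zeta|_{H^{k+1}(\omega_T)}$.

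To conclude, note that $E_h\Pi_h\zeta\in H^2(\O)$ and $\zeta\in H^{k+1}(\O)$, so $|\zeta-E_h\Pi_h\zeta|_{H^\ell(\O)}^2=\sum_{T\in\cT_h}|\zeta-E_h\Pi_h\zeta|_{H^\ell(T)}^2$ for $\ell=0,1,2$; squaring and summing the three local estimates, using $\hT\leq h$ and the finite overlap of $\{\omega_T\}$, gives $|\zeta-E_h\Pi_h\zeta|_{H^\ell(\O)}\leq C\,h^{k+1-\ell}|\zeta|_{H^{k+1}(\O)}$, and multiplication by $h^\ell$ followed by summation over $\ell=0,1,2$ produces \eqref{eq:EhPihError}. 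I expect the jump estimate of the third paragraph to be the only genuinely delicate step, since that is precisely where the $C^1$-consistency of $E_h$ is traded for the optimal convergence rate; the remaining steps are routine scaling arguments whose constants depend only on shape regularity, exactly as in the proof of Theorem~\ref{thm:2DEhError}. One small bookkeeping point is that the sums in \eqref{eq:2DEhLocalError2}--\eqref{eq:2DEhLocalError3} nominally run over every edge meeting $T$, including boundary ones, but, as in the proof of Theorem~\ref{thm:2DEhError}, the quantities that actually occur are jumps across interior edges, so the estimate above applies without change.
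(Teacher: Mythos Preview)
Your proof is correct, but it takes a genuinely different route from the paper's. The paper argues in one stroke via the Bramble--Hilbert lemma: it observes that if $\zeta\in P_k(S_T)$ on the vertex patch $S_T$ (your $\omega_T$), then $\Pi_h\zeta=\zeta$ on $S_T$ and, by \eqref{eq:2DEhLocalError2}, all the relevant jumps vanish so that $\zeta-E_h\Pi_h\zeta=0$ on $T$; the local polynomial reproduction then feeds directly into Bramble--Hilbert (with the scaling handled implicitly by the shape regularity of $S_T$). You instead carry out the quantitative argument by hand: you split $\zeta-E_h\Pi_h\zeta=(\zeta-\Pi_h\zeta)+(\Pi_h\zeta-E_h\Pi_h\zeta)$, bound the second piece via \eqref{eq:2DEhLocalError2}--\eqref{eq:2DEhLocalError3}, and then estimate the normal-derivative jumps of $\Pi_h\zeta$ through a trace inequality combined with the Lagrange interpolation error for $\zeta-\Pi_h\zeta$; the $H^1$ case is recovered by a scaled interpolation inequality between $L_2$ and $H^2$. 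What your approach buys is transparency and self-containedness---every constant is visibly produced by a scaling or trace argument---at the cost of length; the paper's approach buys brevity, but it leaves the uniform boundedness of the local map $\zeta\mapsto(\zeta-E_h\Pi_h\zeta)|_T$ (needed for Bramble--Hilbert with a shape-regular constant) implicit. Your remark at the end that only interior-edge jumps actually occur is accurate and matches the paper's use of \eqref{eq:2DEhLocalError2}--\eqref{eq:2DEhLocalError3}.
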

\begin{proof}
  Let $T\in\cT_h$ be arbitrary and $S_T$ (the star of $T$) be the interior of the union of the closures of all
  the triangles in $\cT_h$ that share a common vertex with $T$.
   If $\zeta\in H^{k+1}(\O)$ belongs to $P_k(S_T)$, then $\Pi_h\zeta=\zeta$
  in $S_T$ and
  hence $\zeta-E_h\Pi_h\zeta=\Pi_h\zeta-E_h\Pi_h\zeta=0$ on $T$ by \eqref{eq:2DEhLocalError2}.
 The estimate \eqref{eq:EhPihError} can then be established through the Bramble-Hilbert lemma
 \cite{BH:1970:Lemma,DS:1980:BH}.
\end{proof}
\section{The Three Dimension Case}\label{sec:3D}
 The construction of $E_h$ in three dimensions follows the same strategy as in Section~\ref{sec:2D},
 and our treatment will be brief regarding the
 results and arguments that are (almost) identical with the two dimensional case.
\subsection{Trace Spaces for a Tetrahedron}\label{subsec:3DLocalSpaces}
 Let $T$ be a tetrahedron with vertices $p_1,p_2,p_3,p_4$, and $F_i$ be the face of $T$ opposite $p_i$.
 Let $\ell$ be a nonnegative number.  A function $u$ belongs to the piecewise Sobolev space
 $H^\ell(\p T)$ if and only if $u_i$, the restriction of $u$ to $F_i$,
  belongs to $H^\ell(F_i)$ for
 $1\leq i\leq 4$.
\par
 For a function  $\phi$  defined on a face $F$ of the tetrahedron $T$, the
  planar gradient $\nabla_{F_j}\phi$ is defined by
    $$\nabla_{F_j}\phi=\nabla\tilde\phi-(\nabla\tilde\phi\cdot\bn_{\sss F,\sss T})\bn_{\sss F,\sss T},$$
 where $\tilde\phi$ is any extension of $\phi$ to a neighborhood of $F$ in $\R^3$.
\par
   The operator $\Tr:H^2(T)\longrightarrow H^\frac32(\p T)\times H^\frac12(\p T)$ is again
  defined by \eqref{eq:Trace} in a piecewise sense.  We want to characterize the image of
 $\ET$ in $\TSpace$  under the operator $\Tr$, for which we will need more notations and definitions.
\par
  The common edge of $F_i$ and $F_j$ is denoted by $e_{ij}(=e_{ji})$ and
  $e_{ij}^\perp$ denotes the two dimensional subspace of $\R^3$ perpendicular to $e_{ij}$.
  The outward unit normal on $F_j$ is denoted by $\bn_j$, and
  we denote by $\bt_{j,i}$ the unit vector tangential to $F_j$, perpendicular to $e_{ij}$ and
  pointing outside $F_j$ (cf. Figure~\ref{fig:3DGeometry}).
%
\begin{figure}[hh]
\includegraphics[width=3in]{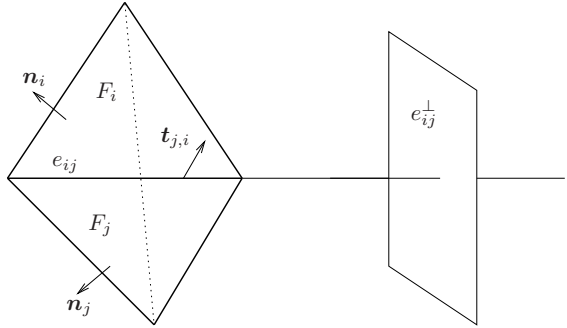}
\caption{Faces, normals, edge and orthogonal subspace}
\label{fig:3DGeometry}
\end{figure}
\begin{definition}\label{def:3DEdgeSpace}\rm
  The space $H^\frac12(e_{ij},e_{ij}^\perp)$ consists of all vector functions $\bw$ defined
  on $e_{ij}$ with image in $e_{ij}^\perp$ such that $\bw\cdot\bz\in H^\frac12(e_{ij})$ for all
   $\bz\in e_{ij}^\perp$.
\end{definition}
\begin{definition}\label{def:3DLocalTraceSpace}\rm
 A pair $(f,g)\in \HTO$ belongs to the space $\TSpace$ if and
 only if the following conditions are satisfied$\,:$
\begin{alignat}{3}
  f_i&=f_j&\qquad&\text{on $e_{ij}$ for $\;1\leq i \neq j\leq 4$},\label{eq:3DC1}\\
\intertext{and there exist $\bw_{ij}=\bw_{ji}\in H^\frac12(e_{ij},e_{ij}^\perp)$ such that}
  \nabla_{F_j} f_j\cdot \bt_{j,i}&=\bw_{ij}\cdot \bt_{j,i}&\qquad&\text{on $e_{ij}$
   for $\;1\leq i\neq j\leq 4$},
  \label{eq:3DC2}\\
  g_j&=\bw_{ij}\cdot\bn_j&\qquad&\text{on $e_{ij}$ for $\;1\leq i\neq j\leq 4$}.\label{eq:3DC3}
\end{alignat}
\end{definition}
%
\par
  Note that we can replace the compatibility conditions \eqref{eq:3DC2}--\eqref{eq:3DC3}
  by the condition
\begin{equation}\label{eq:3DCompatibleGradient}
   \nabla_{F_i}f_i+g_i\bn_i=\nabla_{F_j}f_j+g_j\bn_j\quad\text{on $\,e_{ij}\,$
   for $1\leq i\neq j\leq 4$.}
\end{equation}
\par
  It follows from the Sobolev Embedding Theorem that $\Tr\zeta\in\TSpace$ for
  $\zeta\in\ET$, where $\bw_{ij}$ is the orthogonal projection of
  $\nabla\zeta$ along $e_{ij}$ onto the subspace $e_{ij}^\perp$, and we can recover $\nabla\zeta$ on
  $F_i$ from $(f,g)=\Tr\zeta$ through the relation
\begin{equation}\label{eq:3DGradientRecovery}
  \nabla\zeta=\nabla_{F_i}f_i+g_i\bn_i\qquad\text{on $\,F_i\,$ for $1\leq i\leq 4$}.
\end{equation}
 We want to show that $\Tr\ET=\TSpace$.
\par
 Again we construct a linear bijection $\Phi^*:\TSpace\longrightarrow\tTSpace$ so that
 \eqref{eq:CD} is valid, where $\Phi$ is an orientation preserving affine transformation that maps
 the tetrahedron $\tT$ onto $T$.
 Let $(f,g)\in\TSpace$.  Motivated by \eqref{eq:CD}, \eqref{eq:ChainRule} and
 \eqref{eq:3DCompatibleGradient},  we define $\Phi^*(f,g)=(\tf,\tg)$, where
 $\tf$ is given by \eqref{eq:PullBack1}, $\tg$ is given by \eqref{eq:PullBack2} (where
 $\tilde\bn_i$ is the outward pointing unit normal along the face $\tilde F_i$) and
 the vector field $\mfg$ on $\p T$ is given by
\begin{equation}\label{eq:3DArtificialGradient}
  \mathfrak{g}=\nabla_{F_i}f_i+g_i\bn_i \qquad \text{on $\,F_i\,$ for $1\leq i\leq 4$}.
\end{equation}
\par
 It is straightforward to check that $(\tf,\tg)\in\tTSpace$, $\Phi^*:\TSpace\longrightarrow\tTSpace$ is a bijection,
 and that \eqref{eq:CD} follows from \eqref{eq:ChainRule}--\eqref{eq:PullBack2},
  \eqref{eq:3DGradientRecovery}
 and \eqref{eq:3DArtificialGradient}.
\par
 We can now establish the following analog of Lemma~\ref{lem:2DChracterization}.
\goodbreak
\begin{lemma}\label{lem:3DChracterization}
 The image of $\ET$ under $\Tr$ is the space $\TSpace$.
\end{lemma}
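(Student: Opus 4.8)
The plan is to mimic the two-dimensional argument of Lemma~\ref{lem:2DChracterization}, reducing to a local model near an edge of $T$ via a partition of unity and the pull-back isomorphism $\Phi^*$. One inclusion, $\Tr\ET\subseteq\TSpace$, has already been established, so only the reverse inclusion needs proof: given $(f,g)\in\TSpace$, we must produce $\zeta\in\ET$ with $\Tr\zeta=(f,g)$. First I would dispose of the case in which $f$ and $g$ vanish in a neighborhood of the $1$-skeleton (the union of the edges $e_{ij}$) of $T$. In that situation the four face-data pairs $(f_i,g_i)$ are independent, each supported away from $\p F_i$, and a standard inverse trace theorem on each face (analogous to the operator $L_1$ of Lemma~\ref{lem:2DLifting}, applied face-by-face) together with cut-off functions that localize near the interior of each face yields the desired $\zeta\in\ET$. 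This reduces the problem, after subtracting such a function and using a smooth partition of unity on $\p T$ subordinate to a covering by face-interiors and edge-neighborhoods, to constructing $\zeta$ when $(f,g)$ is supported in a neighborhood of a single open edge $e_{ij}$ of $T$.

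For the edge-localized piece I would invoke the affine invariance \eqref{eq:CD}: applying a suitable orientation-preserving affine map $\Phi$, we may normalize the dihedral geometry along $e_{ij}$ — e.g. arrange that the two faces $F_i,F_j$ meeting at $e_{ij}$ are mutually orthogonal and that $e_{ij}$ lies along a coordinate axis. In these coordinates, write points as $(x,y,z)$ with $e_{ij}=\{(x,0,0)\}$, $F_i\subset\{z=0\}$ and $F_j\subset\{y=0\}$ locally. The compatibility conditions \eqref{eq:3DC1}--\eqref{eq:3DC3}, repackaged as \eqref{eq:3DCompatibleGradient}, say precisely that the pair $(f,g)$ prescribes, along $e_{ij}$, a single tangential-gradient vector field $\bw_{ij}\in H^{1/2}(e_{ij},e_{ij}^\perp)$ together with matching function values; this is exactly the data needed to specify a candidate $C^1$-type jet along $e_{ij}$. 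I would then build $\zeta$ as a finite sum of terms of the form $\psi(x)\,\chi(y,z)\,y^a z^b$ with $a+b\le 1$ whose coefficients are the traces $f|_{e_{ij}}$ and the components of $\bw_{ij}$ along $e_{ij}$, where $\chi$ is a fixed cut-off near $e_{ij}$ — this produces a function whose zeroth- and first-order traces on $F_i$ and $F_j$ match $(f_i,g_i)$ and $(f_j,g_j)$ up to a remainder that vanishes to first order along $e_{ij}$; the remainder then falls into the already-handled "vanishing near the skeleton" case on each face. Membership in $\ET=H^{5/2}(T)$ is checked by noting that each building block lies in $H^{5/2}$ because $f|_{e_{ij}}$ and $\bw_{ij}$ lie in the appropriate fractional Sobolev spaces on the one-dimensional edge and the cut-offs are smooth; here one uses that the face-trace hypothesis $(f,g)\in H^2(\p T)\times H^1(\p T)$ forces these one-dimensional traces to have the half-order extra smoothness demanded by a $5/2$-regular extension.

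The main obstacle — and the reason the three-dimensional statement genuinely needs the appendix on inverse trace theorems rather than the soft macro-element shortcut available in 2D (cf. Remark~\ref{rem:ShortCut}) — is the compatibility of the two face-extensions \emph{along their common edge} at the level of fractional Sobolev regularity. It is not enough that the naive extension has the right traces; one must verify that gluing the $F_i$-data and the $F_j$-data across $e_{ij}$ does not create a singularity that destroys $H^{5/2}(T)$ membership, and this is exactly where the edge compatibility condition $\bw_{ij}=\bw_{ji}$ with $\bw_{ij}\in H^{1/2}(e_{ij},e_{ij}^\perp)$ is used in an essential, quantitative way. I would handle this by citing the inverse trace results of Appendix~\ref{append:ITT} for the model right-dihedral-angle configuration, which provide a bounded right inverse to $\Tr$ on such an edge neighborhood precisely under the hypotheses \eqref{eq:3DC1}--\eqref{eq:3DC3}; the affine reduction via \eqref{eq:CD} then transfers the conclusion back to the original tetrahedron, and reassembling the partition-of-unity pieces completes the proof that $\Tr\ET=\TSpace$.
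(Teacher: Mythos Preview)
Your partition-of-unity decomposition is incomplete: you cover $\p T$ by face-interiors and edge-neighborhoods, but this forces the edge-neighborhoods to contain the \emph{vertices} of $T$, and your edge-local construction cannot handle data that is nonzero near a vertex. Concretely, the closed edge $\bar e_{ij}$ has endpoints $p_k,p_l$; any neighborhood of $\bar e_{ij}$ in $\p T$ meets not only $F_i$ and $F_j$ but also $F_k$ (through $p_l$) and $F_l$ (through $p_k$). Your right-dihedral model $\{(x,y,z):y,z\ge 0\}$ sees only the two faces $F_i,F_j$ and the single edge $e_{ij}$, so the building blocks $\psi(x)\chi(y,z)y^az^b$ have no mechanism to match the prescribed data on the third face through each vertex, nor to respect the compatibility conditions \eqref{eq:3DC1}--\eqref{eq:3DC3} along the other two edges emanating from that vertex. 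Put differently, the ``remainder'' you claim falls into the already-handled case does \emph{not} vanish near the $1$-skeleton once the endpoints of $e_{ij}$ are included.

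The paper's proof resolves this by using a \emph{three}-case partition of unity: (i) data vanishing near all edges and vertices (handled face-by-face via the lifting $L_2$ of Lemma~\ref{lem:3DLifting}); (ii) data supported near an edge \emph{and vanishing near the vertices}, so that after an affine map to a right dihedral angle one can invoke Lemma~\ref{lem:2.5DLocalTrace} on $\R_+\times\R_+\times\R$; and (iii) data supported near a single vertex, where three faces and three edges meet. Case (iii) is genuinely new in three dimensions and requires the separate inverse-trace result Lemma~\ref{lem:3DLocalTrace} for the octant $\R_+^3$, after an affine reduction to a solid right angle. Your argument, in effect, transplants the two-case 2D structure (away-from-vertices / near-a-vertex) into 3D as (away-from-edges / near-an-edge), which misses the increase in codimension of the singular strata; adding the vertex case and citing Lemma~\ref{lem:3DLocalTrace} would repair the proof.
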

\begin{proof}
\par
 Given $(f,g)\in\HTO$ that satisfies \eqref{eq:3DC1}--\eqref{eq:3DC3},  we
 can reduce the construction of $\zeta$ to the following three cases by a partition of unity.
 (i)  $f$ and $g$ vanish near the vertices of $T$ and the edges of $T$, in which case we can use the
 operator $L_2$ in Lemma~\ref{lem:3DLifting}  to obtain $\zeta$.
 (ii) $f$ and $g$  are supported in a neighborhood of an edge and vanish near the vertices of $T$,
  in which case  we can assume through an affine transformation (cf. \eqref{eq:CD})  that the dihedral angle at
   the edge is a right angle and
   obtain $\zeta$ through Lemma~\ref{lem:2.5DLocalTrace}.
  (iii) $f$ and $g$ are supported near a vertex of $T$,  in which case we can assume through an affine transformation that
  the angle at the vertex is a solid right angle and obtain $\zeta$ through
   Lemma~\ref{lem:3DLocalTrace}.
\end{proof}
\subsection{Affine Invariant $\bm{H^2}$ Virtual Element Spaces}\label{subsubsec:3DVEK}
 We will use the same notation $\TSpace$ to denote $\Tr\ET$ for a tetrahedron $T$.  But the definition of
 $\TPoly$ is different.
\begin{definition}\label{def:3DTPoly}\rm Let $T$ be a tetrahedron.
 A pair $(f,g)\in\TSpace$  belongs to $\TPoly$ if and only if
$(f_i,g_i)\in \mathscr{V}^k(F_i)\times P_{k-1}(F_i)$
for $1\leq i\leq 4$.
\end{definition}
\begin{remark}\label{rem:3DTPolyDimension}\rm
 It follows from Lemma~\ref{lem:BM} and the constraints \eqref{eq:3DC1}--\eqref{eq:3DC3} that
 we need the following dofs for $\TPoly$: (i) The value of $v$ at each vertex $p$ together with the values of the
 three directional derivatives along  the three edges emanating from $p$, which requires  $4\times 4$ dofs.
 (ii) The moments of $v$ up to order $k-4$ on each edge, which together with (i)
 ensure the constraint \eqref{eq:3DC1}.  This requires $6\times (k-3)$ dofs.
     (iii) The moments of order up to $k-3$ on each edge
  in order to define, together with (i),
  a polynomial (vector) function of order $\leq k-1$
 on $e$ with images in $e^\perp$, which requires  $6\times 2(k-2)$ dofs.
  We can then use this polynomial (vector) function to define
  $\nabla_{F}\vF\cdot\bn_{e,F}$ on
   any edge $e$ of $F$ through \eqref{eq:3DC2}  and
 $\p v/\p n $ on $\p F$ through \eqref{eq:3DC3}.
 (iv) On each face $F$ we need
  to specify the moments of $v$ and $\p v/\p n$ up to order $k-4$
  in order to
 complete the definition of $\vF\in \mathscr{V}^{k}(F)$ and $\p v/\p n \in P_{k-1}(F)$,
 which requires $4\times 2\times \frac{(k-3)(k-2)}{2}$ dofs.
 Altogether we have
\begin{align}\label{eq:3DTPolyDimension}
\mathrm{dim}\,\TPoly&=16+6(k-3)+12(k-2)+4(k-3)(k-2)\\
&= 2(k-1)(2k+1).\notag
\end{align}
 The (visible) dofs of $\TPoly$ for $k=3$ and $4$ are depicted in Figure~\ref{fig:3DTSpace}, where (i)
 the values of $f$ at the vertices and
 the moments of $f$ on the edges and faces are represented by solid dots,
 and (ii)
 the directional derivatives of $f$ at the vertices and the moments of $g$ on the edges and faces are
 represented by arrows.
\end{remark}
\begin{figure}[h]
\includegraphics[width=4.5in]{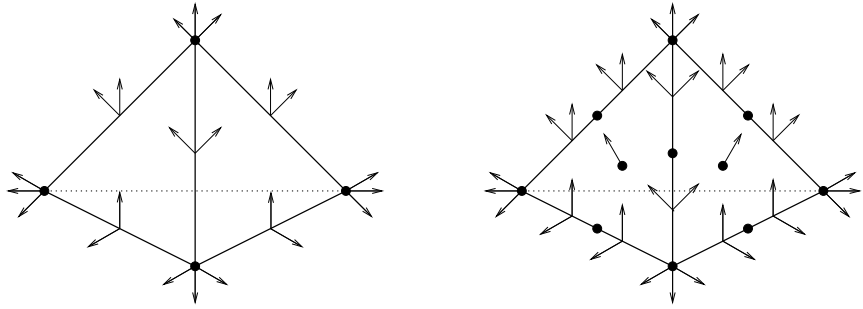}
\caption{Visible degrees of freedom for $(H^2\times H^1)_{3,2}(\p T)$ and $(H^2\times H^1)_{4,3}(\p T)$}
\label{fig:3DTSpace}
\end{figure}
%
%
 The well-posedness result in Lemma~\ref{lem:LocalWellPosedness} remains valid for a tetrahedron $T$
 and the definition of the virtual element space $\hatVE$ on the reference tetrahedron with vertices
 $(0,0,0)$, $(1,0,0)$, $(0,1,0)$ and $(0,0,1)$ is identical to the one in Definition~\ref{def:2DReferenceVEM}
 for the reference triangle.  The virtual element space $\VE$ for an arbitrary tetrahedron is then
 defined as in Definition~\ref{def:2DVEM} through an orientation preserving affine transformation $\Phi$ that
 maps $\hat{T}$ onto $T$, and Lemma~\ref{lem:BM} also holds for a tetrahedron.
\par
  The dimension of $\VE$ is now given by
\begin{align}\label{eq:3DVEMDimension}
 \mathrm{dim}\,\VE&=\dim\TPoly+\mathrm{dim}\,P_{k-4}(T)\\
 &=2(k-1)(2k+1)+\frac16(k-3)(k-2)(k-1)=\frac{(k-1)(k+1)(k+18)}{6}.\notag
\end{align}
\begin{remark}\label{rem:3DVE}\rm
  The definition of $\VE$ for a tetrahedron relies crucially on the fact that
   boundary data satisfying the
  compatibility condition \eqref{eq:3DC1}--\eqref{eq:3DC3} will belong to $\Tr H^2(T)$.
  Unlike the
  two dimensional case (cf. Remark~\ref{rem:ShortCut}),
   this cannot be taken for granted since macro elements of
  arbitrary order that share the same boundary data are yet to be developed.
\end{remark}
\begin{remark}\label{rem:3DVEM}\rm
 Three dimensional $H^2$ virtual elements on arbitrary  polyhedron have recently
 been proposed in \cite{BDR:2019:3DC1}.
\end{remark}
\subsection{Construction on the Skeleton ${\G}={\bigcup}_{T\in\cT_h}{\p T}$}
\label{subsec:3DSkeleton}
 Given any $v\in V_h$,
 we want to define $\fT$ representing (the desired) $E_hv\big|_{\p T}$ and $\gT$ representing
 (the desired) $(\p E_hv/\p n)\big|_{\p T}$ for all $T\in\cT_h$, such that
\begin{equation}\label{eq:3DTrace1}
  (\fT,\gT)\in\TPoly \qquad\forall\,T\in\cT_h
\end{equation}
 and the following conditions are satisfied:
\begin{align}
  &\text{if $T_1$ and $T_2$ belongs to $\cT_p$ (resp., $\cT_e$ or $\cT_F$),
  then $f_{v,\scriptscriptstyle T_1}(p)=f_{v,\scriptscriptstyle T_2}(p)$
  (resp.,  \hspace{30pt}\phantom{a}}
  \label{eq:3DTrace2}\\
  &\text{ $f_{v,\scriptscriptstyle T_1}=f_{v,\scriptscriptstyle T_2}$ on $e$
    or $f_{v,\scriptscriptstyle T_1}=f_{v,\scriptscriptstyle T_2}$ on $F$),}\notag\\
    &\text{if $T_1$ and $T_2$ are two distinct tetrahedra in $\cT_F$, then
   $g_{v,\scriptscriptstyle T_1}+g_{v,\scriptscriptstyle T_1}=0$ on $F$,}\label{eq:3DTrace3}\\
   &\text{if $F\in\cF_h^b$ is a face of $T$ and $v\in H^1_0(\O)$, then $\fT=0$ on $F$.}\label{eq:3DTrace4}
\end{align}
  Note that \eqref{eq:3DTrace2} and \eqref{eq:3DTrace3} imply
 any piecewise $H^2$ function $\xi$ satisfying
 $(\xi_T,\p \xi_T/\p n)\big|_{\p T}=(\fT,\gT)$ for all $T\in\cT_h$ will belong to
 $H^2(\O)$, and \eqref{eq:3DTrace4} implies that $\xi\in H^2(\O)\cap H^1_0(\O)$ if $v\in H^1_0(\O)$.
\subsubsection{Construction at the Vertices}\label{subsubsection:3DVertices}
 As in Section~\ref{subsec:2DSkeleton}, we first define the vectors $\bw_p$ associated with
 the vertices $p$ of $\cT_h$.   There are three cases: (i) $p$ is an interior vertex,
 (ii) $p$ is a boundary vertex that belongs to a face of $\O$,
 (iii) $p$ is a boundary vertex that does not belong to any face of $\O$.
\par\medskip\noindent
{\bf Case\sp(i)}
 For an interior vertex $p$, we choose a tetrahedron $T$ in $\cT_p$ and define $\bw_p$ to be $\nabla \vT$.
 \par\medskip\noindent
{\bf Case\sp(ii)}
 For a boundary vertex $p$ that belongs to a face $F$ of $\O$, we
 define $\bw_p$ to be $\nabla\vT$, where $T$ is a tetrahedron in $\cT_p$ that has a face on $F$.
 This choice ensures that $\bw_p\cdot\bm{t}=0$ if $v\in H^1_0(\O)$, where
 $\bm{t}$ is any vector tangential to $\p\O$ at $p$.
\par\medskip\noindent
{\bf Case\sp(iii)}
 In this case $p$ is either a corner of $\O$ or $p$ belongs to an edge of $\O$.
 We define $\bw_p$ implicitly by
\begin{equation}\label{eq:3DCorner}
  \bw_p\cdot \bm{t}_i=\frac{\p v}{\p t_i}(p) \qquad\text{for}\quad i=1,2,3,
\end{equation}
 where $\p/\p t_1$, $\p/\p t_2$ and $\p/\p t_3$ are the tangential derivatives along three edges
 $e_1, e_2, e_3\in\cE_h^b$ emanating from $p$ that are not coplanar.
 This choice of $e_1,e_2,e_3$ implies $\bw_p={\bf 0}$ if $v\in H^1_0(\O)$.
\begin{remark}\label{rem:3DC1Vertex}\rm
 Note that Remark~\ref{rem:C1Vertex} is also valid here, i.e., $\bw_p=\nabla \vT(p)$ for all $T\in\cT_p$ if
 $v$ is $C^1$ at the vertex $p$.
\end{remark}
\subsubsection{Construction on the Edges}\label{subsubsection:3DEdges}
 In view of \eqref{eq:3DC2} and \eqref{eq:3DC3},
 we also need to define polynomial vector functions $\bw_{e}:e\longrightarrow e^\perp$ on the
 edges $e\in\cE_h$.  There are three cases: (i) $e$ is an interior edge of $\cT_h$, (ii)
 $e$ is a subset of a face of $\O$ and (iii) $e$ is a subset of an edge of $\O$.
\par\medskip
\noindent
{\bf Case\sp(i)}
 Let $e$ belong to $\cE_h^i$.  We choose $T\in\cT_e$,
 and then define $\bw_e$ by the following conditions:
\begin{align}
  &\text{at an endpoint $p$ of $e$, $\bw_e(p)$ is the projection of $\bw_p$ on $e^\perp$.}
  \label{eq:3DEdge2}\\
  &\text{$\bw_e$ and the projection of $\nabla\vT$ on $e^\perp$  have the same
  moments along $e$ up to \hspace{20pt}\phantom{a}}
  \label{eq:3DEdge3}\\
  &\text{order $k-3$.}\notag
\end{align}
\par\medskip\noindent
{\bf Case\sp(ii)}
 Let $e$ be an edge of $\cT_h$ that is a subset of a face $F$ of $\Omega$.  We define $\bw_e$
 again by \eqref{eq:3DEdge2}--\eqref{eq:3DEdge3}, but with the stipulation that one of the
 faces of $T$ is a subset of $F$.
 This additional condition (together with the choices made in Cases (ii) and (iii) in
 Section~\ref{subsubsection:3DVertices})
 implies that
  $\bw_e\cdot\bm{t}=0$ on $e$ if $v\in H^1_0(\O)$, where  $\bm{t}$ is any vector
   tangential to $F$.
\par\medskip\noindent
{\bf Case\sp(iii)}
 Let $e$ be an edge of $\cT_h$ that is a subset of an edge of $\O$.  Then there are two distinct
 faces $F_1,F_2\in \cF_h^b\cap \cF_e$ and
 we  define $\bw_e$ by \eqref{eq:3DEdge2} together with the condition that
\begin{equation}\label{eq:3DEdge4}
  \text{$\bw_e\cdot\bm{n}_{e,\sss F_j}$ and
  $\nabla_{F_j}v_{\sss F_j}\cdot\bm{n}_{e,\sss F_j}$ have identical moments up to order
  $k-3$ for $j=1,2$.}
\end{equation}
%
  Our choices of $F_1$ and $F_2$
  (together with the choices made in Cases (ii) and (iii) in
 Section~\ref{subsubsection:3DVertices}) ensures that $\bw_e=0$ on $e$ if $v\in H^1_0(\O)$.
\begin{remark}\label{rem:3DC1Edge}\rm
 In the case where $v\in V_h$ is $C^1$ across an edge $e\in\cE_h$ and
 at the endpoints of $e$, it follows from Remark~\ref{rem:3DC1Vertex}
 and \eqref{eq:3DEdge2}--\eqref{eq:3DEdge4} that
 the vector field  $\bw_e$ is the projection of $\nabla \vT$ on $e^\perp$  for all $T \in\cT_e$.
\end{remark}
\subsubsection{Construction on the Faces}\label{subsubsec:Faces}
 We define $\gF$ on a face $F\in\cF_h$ as follows.  We choose $T\in\cT_F$ and
 stipulate that
\begin{align}
  &\text{on an edge $e$ of $F$, $\gF\in P_{k-1}(e)$ is given by $\bw_e\cdot\bn_{\sss F,\sss T}$},
  \label{eq:gF1}\\
  &\text{$\gF$ and $\p\vT/\p n$ have the same moments up to order $k-4$ on $F$.
  \hspace{50pt}\phantom{a}}\label{eq:gF2}
\end{align}
\begin{remark}\label{rem:3DgTEdge}\rm
 If  $v$ is $C^1$
  across $e\in\cE_h$ and at the endpoints of $e$, then we have $\gF=\p \vT/\p n$ on
  $e$ for all $F\in\cF_e$ and $T\in\cT_F$ by Remark~\ref{rem:3DC1Edge} and \eqref{eq:gF1}.
\end{remark}
%
\subsubsection{Construction on the Tetrahedra}
 We are now ready to define $(\fT,\gT)\in H^2(\p T)\times H^1(\p T)$ for any $T\in\cT_h$ as follows.
 On any edge $e$ of a face $F$ of $T$, $\fpF$ is the unique polynomial in $P_k(e)$ with the following properties:
\begin{align}
  &\text{$\fpF$ agrees with $v$ at the two endpoints of $e$ and share the same moments up}
  \label{eq:3DSkeleton1}\\
  &\text{to order $k-4$,}\notag\\
  &\text{the directional derivative of $\fpF$ at an endpoint $p$ of $e$ in the direction of the}
 \label{eq:3DSkeleton2}\\
   &\text{tangent $\bm{t}_e$ of $e$
   is given by $\bw_p\cdot\bm{t}_e$.}\notag
\end{align}
\begin{remark}\label{rem:3DvTEdge}\rm
  Remark~\ref{rem:vTEdge} is also valid here, i.e., $\fpF= v$ on $e$ if $v$ is $C^1$ at the endpoints of $e$.
\end{remark}
 Let $F$ be a face of $T$ and $e$ be an edge of $F$,  we define $\qF\in P_{k-1}(e)$ by
\begin{equation}\label{eq:3DSkeleton3}
  \qF=\bw_e\cdot\bn_{e,\scriptscriptstyle F}.
\end{equation}
%
%
 On each face $F$ of $T$,  the pair $(\fpF,\qF)$ belongs to $\FPoly$ by \eqref{eq:3DEdge2} and
 \eqref{eq:3DSkeleton1}--\eqref{eq:3DSkeleton3}.
 Hence we can define $\fF\in\mathscr{V}^k(F)$ to be the virtual element function
 (cf.  Lemma~\ref{lem:GeneralBM})
 that satisfies the following conditions:
\begin{equation}\label{eq:3DSkeleton4}
 \text{$\Tr \fF=(\fpF,\qF)$ on $\p F$ and $Q_{F,k-4}\fF=Q_{F,k-4}v$.}
\end{equation}
\begin{remark}\label{rem:uF}\rm
  If $v$ is $C^1$ on $\p F$, then $\qF=\p \vF/\p n$ on $\p F$ 
  by  Remark~\ref{rem:3DC1Edge} and \eqref{eq:3DSkeleton3}.
  It then follows from Remark~\ref{rem:PolynomialSubspace},
  Remark~\ref{rem:3DvTEdge} and \eqref{eq:3DSkeleton4} that
  $\fF=v$ on $F$.
\end{remark}
\par
 Given any face $F$ of $T$, we define
\begin{align}\label{eq:3DSkeleton5}
  &\text{$\gT=\gF$ if $T$ is the tetrahedron chosen in the definition of $\gF$
   (cf. }\hspace{30pt}\phantom{a}\\
   &\text{Section~\ref{subsubsec:Faces}), otherwise $\gT=-\gF$.}\notag
\end{align}
\begin{remark}\label{rem:3DgTFace}\rm
  If $v$ is $C^1$ on $\p T$, then \eqref{eq:gF2}, Remark~\ref{rem:3DgTEdge} and
  \eqref{eq:3DSkeleton5} imply
   $\gT=\p \vT/\p n$ on $\p T$.
\end{remark}
\par
 At the end of this process, we have constructed $(\fT,\gT)\in H^2(\p T)\times H^1(\p T)$
  for every polyhedron $T\in\cT_h$.  The pair $(\fT,\gT)$ belongs to
 $\TPoly$ because (i) the condition \eqref{eq:3DC1} is implied by
  \eqref{eq:3DSkeleton1}--\eqref{eq:3DSkeleton2}, (ii) the condition \eqref{eq:3DC2} is implied
  by \eqref{eq:3DSkeleton3}--\eqref{eq:3DSkeleton4}, and (iii) the  condition \eqref{eq:3DC3}
  is implied by \eqref{eq:gF1}.
 \par
  It follows from \eqref{eq:3DSkeleton1}--\eqref{eq:3DSkeleton4} that \eqref{eq:3DTrace2} is satisfied,
  and the condition \eqref{eq:3DTrace3} follows from
  \eqref{eq:3DSkeleton5}.  The choices we make in Section~\ref{subsubsection:3DVertices} and
  Section~\ref{subsubsection:3DEdges} ensure that $\fpF$ defined by \eqref{eq:3DSkeleton1}--\eqref{eq:3DSkeleton2}
 and $\qF$ defined by \eqref{eq:3DSkeleton3} both vanish
  on $\p F$ if the face
  $F$ of $T$ is a subset of $\p\O$ and $v\in H^1_0(\O)$.   The condition \eqref{eq:3DTrace4}  then
  follows from \eqref{eq:3DSkeleton4}.
\par
 In view of Remark~\ref{rem:uF} and Remark~\ref{rem:3DgTFace}
 the relation \eqref{eq:C1Invariance} remains valid, i.e., $E_hv=v$ if $v\in V_h$ is $C^1$ on $\p T$,
 which is the basis for the estimates \eqref{eq:EhError} and \eqref{eq:EhPihError}.
\subsection{The Operator ${E}_{h}$}\label{subsec:3DEh}
 We proceed as in Section~\ref{subsec:2DEh}.
  Let $v\in V_h$ and $T\in\cT_h$ be arbitrary, and $(\fT,\gT)\in \TPoly$  be
 the function pair constructed in
 Section~\ref{subsec:3DSkeleton}.  We define $E_h v\in\VE$  again by
 the conditions in \eqref{eq:Eh}, i.e.,
\begin{equation}\label{eq:3DEh}
  (E_hv,\p E_h v/\p n)=(\fT,\gT) \quad\text{on $\p T$} \quad
  \text{and} \quad Q_{T,k-4}(E_hv)=Q_{T,k-4}(v).
\end{equation}
\par
  It follows from \eqref{eq:3DTrace2}--\eqref{eq:3DTrace3} that
 $E_hv\in H^2(\O)$, and \eqref{eq:3DTrace4} implies that $E_hv\in H^1_0(\O)$ if $v\in H^1_0(\O)$.
\par
 The estimates  \eqref{eq:EhError} and \eqref{eq:EhPihError} are established by similar arguments as
 in Section~\ref{subsec:2DEh}, where the analog of \eqref{eq:LTwoNorm} for a tetrahedron $T$
 (cf. Lemma~\ref{lem:GeneralBM} and  Remark~\ref{rem:3DTPolyDimension}) is given by
\begin{align}\label{eq:3DL2Norm}
  \|\xi\|_{L_2(T)}^2&\approx \|Q_{T,k-4}\xi\|_{L_2(T)}^2
     +\sum_{F\in\cF_T}\hT\|Q_{F,k-4}\xi\|_{L_2(F)}^2
     +\sum_{F\in\cF_T}\hT^3\|Q_{F,k-4}(\p\xi/\p n)\|_{L_2(F)}^2\notag\\
     &\hspace{30pt}+\sum_{e\in\cE_T}\hT^2\|Q_{e,k-4}\xi\|_{L_2(e)}^2
        +\sum_{e\in\cE_T}\hT^4\|Q_{e,k-3}(\nabla\xi)_{e^\perp}\|_{L_2(e)}^2\\
   &\hspace{60pt}+\sum_{p\in\cV_T}\big[\hT^3\xi^2(p)+\hT^5|\nabla\xi(p)|^2\big]\notag
\end{align}
 for all $\xi\in\VE$, where $\cF_T$ (resp., $\cE_T$ and $\cV_T$)  is the set of the four faces
 (resp., six edges and four vertices) of $T$ and $(\nabla\xi)_{e^\perp}$ is the orthogonal projection of
 $\nabla\xi$ onto the subspace of $\R^3$ perpendicular to $e$.    The hidden constants in
 \eqref{eq:3DL2Norm} only depend on the shape regularity of $\cT_h$ because of the affine invariance of
 the virtual element spaces.
\par
 It follows from \eqref{eq:3DSkeleton1}, \eqref{eq:3DSkeleton4},
 \eqref{eq:3DEh} and \eqref{eq:3DL2Norm}
 that we have the following analog of \eqref{eq:2DLocalEhError1}:
\begin{align}\label{eq:3DLocalEhError}
 \|v-E_hv\|_{L_2(T)}^2&\approx  \sum_{p\in\cV_T}\hT^5|\nabla(v-E_hv)(p)|^2
  +\sum_{e\in\cE_T}\hT^4\|Q_{e,k-3}\big(\nabla(v-E_hv)\big)_{e^\perp}\|_{L_2(e)}^2\\
    &\hspace{40pt}+\sum_{F\in\cF_T}\hT^3\| Q_{F,k-4}\jump{\p v/\p n}\|_{L_2(F)}^2.\notag
\end{align}
 We can then establish the three-dimensional analogs
 of Theorem~\ref{thm:2DEhError} and Theorem~\ref{thm:2DEhPihError} as in
 Section~\ref{subsec:2DEh}.
\section{Concluding Remarks}\label{sec:Conclusions}
 Following the approach of this paper (and with more patience and persistence), one can construct
 enriching operators $E_h$ that maps
 the totally discontinuous $P_k$ finite element space into $H^2(\O)$,
 where \eqref{eq:EhError}
 and \eqref{eq:EhPihError} are valid for $J(w,v)$ given by
\begin{alignat*}{3}
  J(w,v)&=\sum_{e\in\cE_h^i}\Big[h_e^{-3}\int_e \jump{w}\jump{v} ds
    +h_e^{-1}\int_e \jump{\p w/\p n}\jump{\p v/\p n}ds\Big]
  &\qquad&\text{for $d=2$},\\
  J(w,v)&=\sum_{F\in\cF_h^i}\Big[h_F^{-3}\int_e \jump{w}\jump{v} dS+
    \hF^{-1}\int_F \jump{\p w/\p n}\jump{\p v/\p n}dS\Big]
  &\qquad&\text{for $d=3$}.
\end{alignat*}
\par
  One can also construct $E_h:V_h\cap H^1_0(\O)\longrightarrow H^2_0(\O)$ such that
   \eqref{eq:EhError} and \eqref{eq:EhPihError} are valid, provided the sum in \eqref{eq:2DJump}
   (resp., \eqref{eq:3DJump}) is taken over $\cE_h$ (resp., $\cF_h$).  This can also be carried out
   for the totally discontinuous $P_k$ finite element space.
\par
 Lemma~\ref{lem:3DChracterization} is
  also of independent interest, since inverse trace theorems for
 polyhedral domains in $\R^3$  do not appear to be readily available in the literature.
\appendix
 \section{Inverse Trace Theorems for $\R_+^2$ and $\R_+^3$}\label{append:ITT}
 We consider inverse trace theorems for $\R_+^2$ and $\R_+^3$ with data on the boundaries of
 these domains (cf. Figure~\ref{fig:Appendix}).  We will rely on the results in Lemma~\ref{lem:2DLifting}
 and Lemma~\ref{lem:3DLifting} that
  follow from the construction of inverse trace operators through the
  Fourier transform \cite{Necas:2012:Direct,Wloka:1987:PDE} and the Paley-Wiener theorem
      \cite{Lax:2002:FA}.
\begin{figure}[h]
\includegraphics[width=4in]{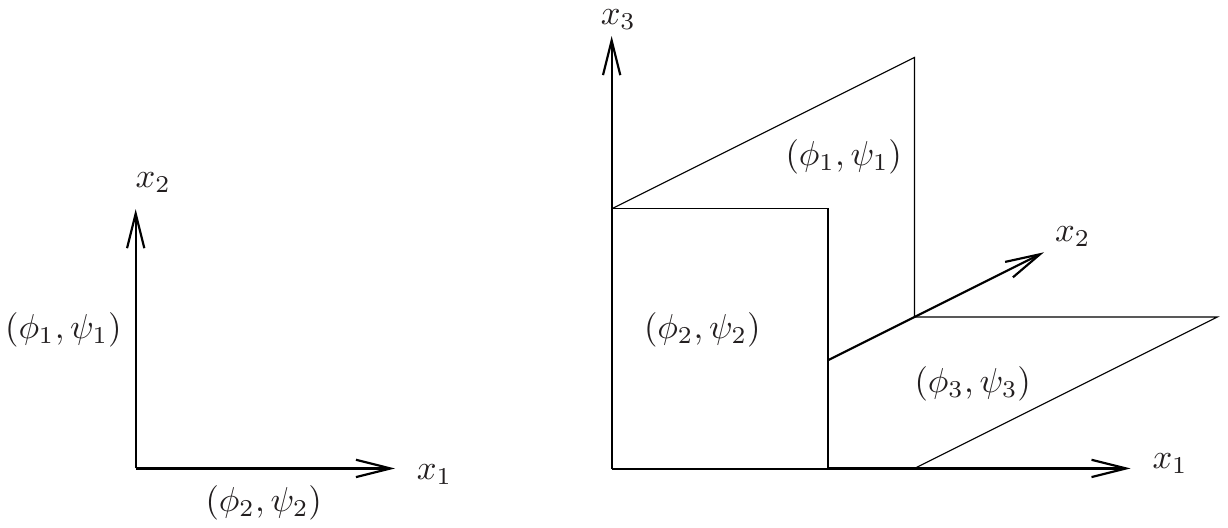}
\caption{Boundary data for $\R_+^2$ and $\R_+^3$}
\label{fig:Appendix}
\end{figure}
\begin{lemma}\label{lem:2DLifting}
  There exists a bounded linear operator $L_1:H^{2}(\R)\times H^{1}(\R)\longrightarrow
  H^{\frac52}(\R^2)$ such that $(i)$ $[L_1(\phi,\psi)](t,0)=\phi(t)$, $(ii)$ $[\p L_1(\phi,\psi)/\p x_2](t,0)=\psi(t)$,
  and $(iii)$  $L_1(\phi,\psi)(x_1,x_2)$ vanishes on the half plane $x_1<0$
  if  $\phi(t)$ and $\psi(t)$ vanish on the half line $t<0$.
\end{lemma}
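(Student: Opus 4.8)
The plan is to construct $L_1$ explicitly via the Fourier transform in the tangential variable $t = x_1$, following the classical recipe for inverse trace operators on a half-space but being careful to (a) land in $H^{5/2}(\R^2)$ rather than merely $H^2$, and (b) preserve the support condition in $x_1$, which is the feature not covered by the off-the-shelf statements. Writing $\hat\phi(\tau)$, $\hat\psi(\tau)$ for the Fourier transforms, I would set
\begin{equation*}
  \widehat{L_1(\phi,\psi)}(\tau,x_2) = \hat\phi(\tau)\,\theta\big(x_2\langle\tau\rangle\big) + \psi_\natural(\tau)\, x_2\, \theta\big(x_2\langle\tau\rangle\big),
\end{equation*}
where $\langle\tau\rangle = (1+\tau^2)^{1/2}$, $\theta$ is a fixed Schwartz cut-off with $\theta(0)=1$ and $\theta'(0)=0$, and $\psi_\natural$ is an appropriate correction of $\hat\psi$ (one subtracts the $x_2$-derivative at $0$ coming from the first term, i.e. $\psi_\natural = \hat\psi$ since $\theta'(0)=0$ already kills the cross term, so in fact $\psi_\natural = \hat\psi$ works). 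The inverse Fourier transform then gives a function on $\R^2$; conditions $(i)$ and $(ii)$ are immediate from $\theta(0)=1$, $\theta'(0)=0$.

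Next I would verify the regularity bound $\|L_1(\phi,\psi)\|_{H^{5/2}(\R^2)} \lesssim \|\phi\|_{H^2(\R)} + \|\psi\|_{H^1(\R)}$ by the standard Plancherel computation: one estimates $\int_{\R}\int_0^\infty (1+\tau^2+\eta^2)^{5/2}\,|\widehat{L_1(\phi,\psi)}|^2\,dx_2\,d\tau$ — actually it is cleaner to keep $x_2$ on the physical side and use the norm characterization $\|u\|_{H^{5/2}(\R^2)}^2 \approx \int_\R \big(\langle\tau\rangle^5 \|\hat u(\tau,\cdot)\|_{L_2}^2 + \langle\tau\rangle^{?}\cdots\big)$; the scaling $x_2 \mapsto x_2\langle\tau\rangle$ in the ansatz is exactly what converts the $\tau$-weights into the correct powers, yielding $\langle\tau\rangle^{5}\langle\tau\rangle^{-1}|\hat\phi|^2 \sim \langle\tau\rangle^{4}|\hat\phi|^2$ and $\langle\tau\rangle^{5}\langle\tau\rangle^{-3}|\hat\psi|^2 \sim \langle\tau\rangle^{2}|\hat\psi|^2$ after integrating in $x_2$, matching $H^2(\R)$ and $H^1(\R)$ respectively. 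This is routine and I would state it compactly, citing \cite{Necas:2012:Direct,Wloka:1987:PDE}.

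The one genuinely new point — and the main obstacle — is property $(iii)$: if $\phi$ and $\psi$ are supported in $t\geq 0$, then $L_1(\phi,\psi)$ must vanish for $x_1<0$. This does not follow from the construction above for an arbitrary choice of $\theta$, because multiplication by $\theta(x_2\langle\tau\rangle)$ in the $\tau$-variable is convolution in $t$ and destroys one-sided support. The remedy is to choose the cut-off so that it is \emph{holomorphic and of exponential type in $\tau$} and invoke the Paley--Wiener theorem: I would replace $\theta(x_2\langle\tau\rangle)$ by a factor of the form $e^{-x_2\sqrt{\tau^2+1}}$-type symbol made analytic, more precisely use $m(\tau)x_2 \mapsto$ a function whose partial Fourier transform in $\tau$ is supported in $\{t\geq 0\}$ whenever $x_2\geq 0$. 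Concretely, one takes the lifting $\widehat{L_1(\phi,\psi)}(\tau,x_2) = e^{-x_2(1-i\tau)}\hat\phi(\tau)\cdot(\text{polynomial corrector}) + \cdots$ built from $(1-i\tau)$, which extends holomorphically to $\mathrm{Im}\,\tau < 0$ and decays there for $x_2>0$; by Paley--Wiener \cite{Lax:2002:FA} the inverse transform in $\tau$ is then supported in $x_1\geq 0$ provided $\hat\phi$ (and $\hat\psi$) already have that property. One must re-check the $H^{5/2}$ bound for this analytic choice — since $|1-i\tau| \approx \langle\tau\rangle$ the weight bookkeeping is unchanged — and re-check $(i)$, $(ii)$ by adjusting the polynomial correctors so that the trace and normal trace at $x_2=0$ come out to $\phi$ and $\psi$ exactly. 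I expect the support argument via Paley--Wiener, together with the simultaneous need to keep the trace conditions and the fractional Sobolev estimate, to be the part requiring the most care; everything else is the classical half-space lifting.
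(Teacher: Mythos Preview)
Your approach is exactly what the paper indicates: it does not give a detailed proof of this lemma but simply states that it follows from the standard Fourier-transform construction of inverse trace operators together with the Paley--Wiener theorem (citing \cite{Necas:2012:Direct,Wloka:1987:PDE,Lax:2002:FA}), and your proposal fleshes out precisely that strategy, including the key observation that a holomorphic symbol such as $1-i\tau$ (with $|1-i\tau|=\langle\tau\rangle$) must replace $\langle\tau\rangle$ so that Paley--Wiener yields the one-sided support in $x_1$.

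One point to tidy up: your exponential ansatz $e^{-x_2(1-i\tau)}$ decays only for $x_2>0$, so as written it produces a lifting in $H^{5/2}(\R\times\R_+)$ rather than $H^{5/2}(\R^2)$; you still need to extend across $x_2=0$. A Seeley-type reflection in the $x_2$-variable (a finite linear combination of $u(x_1,-c_j x_2)$) does this, preserves the $H^{5/2}$ regularity, and---since it acts only in $x_2$---leaves the $x_1$-support untouched, so property~(iii) survives. With that addition the argument is complete and matches the paper's intended route.
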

\begin{lemma}\label{lem:3DLifting}
   There exists a bounded linear operator $L_2:H^{2}(\R^2)\times H^{1}(\R^2)\longrightarrow
  H^{\frac52}(\R^3)$ with the following properties: $(i)$ $[L_2(\phi,\psi)](x_1,x_2,0)=\phi(x_1,x_2)$,
  $(ii)$ $[\p L_2(\phi,\psi)/\p x_3](x_1,x_2,0)=\psi(x_1,x_2)$,
  and $(iii)$  $L_2(\phi,\psi)(x_1,x_2,x_3)$ vanishes on the half space $x_1<0$ $($resp., $x_2<0)$ if
  $\phi(x_1,x_2)$ and $\psi(x_1,x_2)$ vanish on the half plane $x_1<0$ $($resp., $x_2<0)$.
\end{lemma}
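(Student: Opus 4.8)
The plan is to construct $L_2$ explicitly by iterating a one-dimensional lifting in the "vertical" direction $x_3$, following the classical Fourier-analytic recipe for inverse trace operators. First I would recall the standard construction: given $\phi\in H^2(\R^2)$ and $\psi\in H^1(\R^2)$, take the partial Fourier transform in the tangential variables $(x_1,x_2)\mapsto\xi=(\xi_1,\xi_2)$, and define
\begin{equation*}
  \widehat{L_2(\phi,\psi)}(\xi,x_3)=\theta(|\xi|x_3)\,\hat\phi(\xi)
  +x_3\,\theta(|\xi|x_3)\,\hat\psi(\xi),
\end{equation*}
where $\theta\in C_c^\infty(\R)$ is a fixed cutoff with $\theta(0)=1$ and $\theta'(0)=0$. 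Transforming back, properties (i) and (ii) are immediate from $\theta(0)=1$, $\theta'(0)=0$, and the boundedness $L_2:H^2(\R^2)\times H^1(\R^2)\to H^{5/2}(\R^3)$ follows from the usual Plancherel estimate: one checks that $\int_{\R^3}(1+|\xi|^2+\tau^2)^{5/2}|\widehat{L_2(\phi,\psi)}(\xi,\tau)|^2\,d\xi\,d\tau$ is controlled by $\|\phi\|_{H^2(\R^2)}^2+\|\psi\|_{H^1(\R^2)}^2$ after the substitution $\tau\mapsto|\xi|\tau$, exactly as in \cite{Necas:2012:Direct,Wloka:1987:PDE}.

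The substantive part is property (iii), the support preservation in a tangential half-space, say $x_1<0$. Here the Fourier-multiplier formula above is not directly usable because the multiplier $\theta(|\xi|x_3)$ is not holomorphic in $\xi_1$; instead I would proceed exactly as the paper signals, via the Paley--Wiener theorem \cite{Lax:2002:FA}. The key observation is that $\phi$ and $\psi$ supported in $\{x_1\ge 0\}$ means their Fourier transforms (in $x_1$ alone) extend holomorphically to the lower half-plane $\operatorname{Im}\xi_1\le 0$ with appropriate polynomial growth. One then needs to choose the lifting so that this holomorphy and growth are inherited by $\widehat{L_2(\phi,\psi)}$ as a function of $\xi_1$ for each fixed $x_3>0$ and $\xi_2$. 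The clean way is to replace $\theta(|\xi|x_3)$ by a symbol that is holomorphic in $\xi_1$; the standard device is to use $e^{-x_3\langle\xi\rangle}$-type kernels built from $\sqrt{1+\xi_2^2+\xi_1^2}$ only after a preliminary splitting, OR — simpler and in the spirit of this paper — to build $L_2$ by composing two one-variable liftings: first apply (a variant of) $L_1$ in the $x_3$ direction with the tangential variable frozen, treating $\phi,\psi$ as $H^2,H^1$ functions valued in a scale of Sobolev spaces, and use that $L_1$ already has the half-line support-preservation property (iii) of Lemma~\ref{lem:2DLifting}. Since $L_1$'s support property is in the \emph{tangential} variable $t$, and here the roles are set up so that $x_1$ plays the role of that tangential variable while $x_3$ plays the role of the normal variable, the desired conclusion for $x_1<0$ transfers directly. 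The analogous statement for $x_2<0$ holds by the symmetric construction (or by noting the tangential Fourier multiplier can be taken to depend symmetrically so a single $L_2$ works for both half-spaces simultaneously, which is what the statement asserts).

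Concretely the steps are: (1) fix the cutoff $\theta$ and define the candidate operator by the partial-Fourier formula; verify (i) and (ii) by evaluating at $x_3=0$ and differentiating; (2) prove boundedness $H^2\times H^1\to H^{5/2}$ by Plancherel and the scaling $\tau=|\xi|s$, tracking that the weight $(1+|\xi|^2+\tau^2)^{5/2}$ produces finite $s$-integrals thanks to the Schwartz decay of $\theta$; (3) prove (iii) by the Paley--Wiener characterization: show that if $\operatorname{supp}\phi,\operatorname{supp}\psi\subset\{x_1\ge0\}$ then for each fixed $x_3>0$ the partial Fourier transform of $L_2(\phi,\psi)$ in $x_1$ extends holomorphically into the appropriate half-plane with $L^2$-bounded traces, hence $L_2(\phi,\psi)(\cdot,x_2,x_3)$ is supported in $\{x_1\ge0\}$; take $x_3\downarrow$-limits / use continuity in $x_3$ into $H^{5/2}$ to get it for all $x_3$ including the closed half-space; (4) invoke symmetry in $(x_1,x_2)$ for the $x_2<0$ case. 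The main obstacle I expect is reconciling the Fourier-multiplier formula (which makes boundedness and the trace identities transparent) with the support property (which wants a holomorphic, non-rotationally-symmetric symbol): the resolution is to either choose the symbol as a tensorized/holomorphic-in-$\xi_1$ kernel from the outset, or — as indicated above — to derive $L_2$ from $L_1$ by a vector-valued lifting so that Lemma~\ref{lem:2DLifting}(iii) does the work, at the cost of a short interpolation-space bookkeeping argument to confirm the $H^{5/2}(\R^3)$ target regularity.
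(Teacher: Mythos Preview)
The paper does not actually prove this lemma; it is stated as a consequence of the Fourier-transform construction of inverse trace operators (Ne\v{c}as, Wloka) together with the Paley--Wiener theorem (Lax).  Your plan follows precisely that outline, and you correctly isolate the real difficulty: the rotationally symmetric multiplier $\theta(|\xi|x_3)$ has branch points at $\xi_1=\pm i|\xi_2|$ and therefore does not extend holomorphically to a tangential half-plane, so Paley--Wiener cannot be applied to it directly.

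Your preferred workaround~(b), however, does not close.  Applying $L_1$ in the $(x_1,x_3)$ variables with $x_2$ frozen produces a symbol depending only on $(\xi_1,x_3)$; such a lift performs no smoothing in the $\xi_2$ direction, and the output lies only in $L^2\big(\R_{x_2};H^{5/2}(\R^2_{x_1,x_3})\big)\cap H^{2}\big(\R_{x_2};H^{1/2}(\R^2_{x_1,x_3})\big)$, which is strictly larger than $H^{5/2}(\R^3)$.  (Concretely, for large $|\xi_2|$ the required bound $\int(1+|\xi|^2+\tau^2)^{5/2}|\hat m(\xi_1,\tau)|^2\,d\tau\lesssim(1+|\xi|^2)^2$ fails by a factor $|\xi_2|$.)  No interpolation bookkeeping recovers the missing half-derivative in $x_2$, and the ``symmetric construction'' for the $x_2$-half-space would produce a \emph{different} operator, whereas the lemma demands a single $L_2$.

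The viable route is your workaround~(a), made concrete: drop the radial symbol and write the lift in physical space as
\[
  [L_2(\phi,\psi)](x',x_3)=\chi(x_3)\int_{\R^2}K(y)\big[\phi(x'-x_3y)+x_3\,\psi(x'-x_3y)\big]\,dy,
\]
with $K\in C_c^\infty(\R^2)$ supported in $[0,\infty)^2$ satisfying $\int K=1$, $\int yK(y)\,dy=0$, and $\chi\in C_c^\infty(\R)$ with $\chi(0)=1$, $\chi'(0)=0$.  The trace identities and the $H^{5/2}$ bound follow by the same Plancherel scaling you describe (the Fourier symbol is $\hat K(x_3\xi)$, Schwartz in $\xi$), and property~(iii) is now immediate from the convolution structure---equivalently, from Paley--Wiener applied to the entire function $\hat K$, which is bounded in each lower half-plane $\operatorname{Im}\xi_j\le 0$ precisely because $K$ is supported in the first quadrant.
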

\par
 We begin with a two-dimensional inverse trace theorem.  We note that similar results for
 $H^2(\R_+^2)$ can be
 found in \cite[Section~1.5.2]{Grisvard:2011:EPN}.  Our approach is simpler
 (since we are considering  $H^\frac52(\R_+^2)$) and therefore its extension
 to three dimensions is easier.
\begin{lemma}\label{lem:2DLocalTrace}
  Let $(\phi_1,\psi_1)$ and $(\phi_2,\psi_2)$ belong to
  $H^{2}(\R_+)\times H^{1}(\R_+)$ such that
\begin{align}
  \phi_1(0)&=\phi_2(0),\label{eq:2DLocalTrace1}\\
    \psi_1(0)&=\phi_2'(0),\label{eq:2DLocalTrace2}\\
  \psi_2(0)&=\phi_1'(0). \label{eq:2DLocalTrace3}
\end{align}
  Then there exists $\zeta\in H^\frac52(\R_+\times\R_+)$ such that
\begin{equation}\label{eq:2DLocalTrace4}
   (\zeta,\p\zeta/\p x_i)=(\phi_i,\psi_i) \qquad \text{if $x_i=0$, $\,1\leq i\leq 2$}.
\end{equation}
\end{lemma}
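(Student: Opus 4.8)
The plan is to peel off the first–order Taylor polynomial of the prospective $\zeta$ at the corner $(0,0)$ — which is well defined precisely because of \eqref{eq:2DLocalTrace1}--\eqref{eq:2DLocalTrace3} — thereby reducing to data that vanish to the appropriate order at the corner, and then to apply the Fourier–based lifting $L_1$ of Lemma~\ref{lem:2DLifting} once for each of the two half–lines $\{x_1=0\}$ and $\{x_2=0\}$. The reason for the corner reduction is that it allows us to extend each datum by zero across $0$, so that property (iii) of $L_1$ makes the lifting associated with one half–line vanish identically in a neighborhood of the other half–line; the two liftings can then simply be added together without disturbing each other's boundary values.

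In more detail, set $P(x_1,x_2)=\phi_1(0)+\psi_1(0)x_1+\psi_2(0)x_2$. Conditions \eqref{eq:2DLocalTrace1}--\eqref{eq:2DLocalTrace3} say exactly that, on $\{x_1=0\}$, $P=\phi_1(0)+\psi_2(0)x_2$ matches $\phi_1$ to first order at $x_2=0$ (since $\psi_2(0)=\phi_1'(0)$) and $\p P/\p x_1=\psi_1(0)$ matches $\psi_1$ at $x_2=0$, and symmetrically on $\{x_2=0\}$ (using \eqref{eq:2DLocalTrace1} and \eqref{eq:2DLocalTrace2}). Choose $\chi\in C_c^\infty(\R^2)$ with $\chi\equiv1$ near the origin and replace $(\phi_i,\psi_i)$ by $\hat\phi_i=\phi_i-(\chi P)\big|_{x_i=0}$ and $\hat\psi_i=\psi_i-\big(\p(\chi P)/\p x_i\big)\big|_{x_i=0}$. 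Since $\chi P\in C_c^\infty(\R^2)$, its traces lie in $C_c^\infty$ of one variable, so $(\hat\phi_i,\hat\psi_i)$ still belong to $H^{2}(\R_+)\times H^{1}(\R_+)$; and because $\chi\equiv1$ near $0$ they satisfy $\hat\phi_i(0)=\hat\phi_i'(0)=0$ and $\hat\psi_i(0)=0$. It therefore suffices to build $\tilde\zeta\in H^{5/2}(\R_+\times\R_+)$ realizing the modified data, since then $\zeta=\tilde\zeta+\chi P$ — again in $H^{5/2}(\R_+\times\R_+)$, as $\chi P$ is smooth and compactly supported — realizes the original data.

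Because $\hat\phi_i\in H^{2}(\R_+)$ vanishes to first order at $0$ and $\hat\psi_i\in H^{1}(\R_+)$ vanishes at $0$, their zero–extensions $\hat\phi_i^{\,\mathrm{ext}},\hat\psi_i^{\,\mathrm{ext}}$ to $\R$ lie in $H^2(\R)$, resp.\ $H^1(\R)$, and vanish on the negative half–line; this is the point at which the extra half order of smoothness in $H^{5/2}$ (as opposed to $H^2(\R_+^2)$) keeps the argument clean. Put $\zeta_2=L_1(\hat\phi_2^{\,\mathrm{ext}},\hat\psi_2^{\,\mathrm{ext}})$ and $\zeta_1(x_1,x_2)=\big[L_1(\hat\phi_1^{\,\mathrm{ext}},\hat\psi_1^{\,\mathrm{ext}})\big](x_2,x_1)$, the arguments in the latter swapped so that $x_1$ plays the role of the normal variable. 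Properties (i)--(ii) of Lemma~\ref{lem:2DLifting} give $(\zeta_i,\p\zeta_i/\p x_i)=(\hat\phi_i,\hat\psi_i)$ on $\{x_i=0\}$, while property (iii) gives that $\zeta_1$ vanishes on $\{x_2<0\}$ and $\zeta_2$ vanishes on $\{x_1<0\}$. Since $H^{5/2}(\R^2)\hookrightarrow C^1(\R^2)$ by the Sobolev Embedding Theorem, the vanishing of $\zeta_1$ on $\{x_2<0\}$ forces $\zeta_1=\p\zeta_1/\p x_2=0$ on $\{x_2=0\}$, and likewise $\zeta_2=\p\zeta_2/\p x_1=0$ on $\{x_1=0\}$. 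Hence $\tilde\zeta:=(\zeta_1+\zeta_2)\big|_{\R_+\times\R_+}\in H^{5/2}(\R_+\times\R_+)$ satisfies $(\tilde\zeta,\p\tilde\zeta/\p x_i)=(\hat\phi_i,\hat\psi_i)$ on $\{x_i=0\}$ for $i=1,2$, which completes the construction.

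The one step that requires genuine care is the corner reduction: verifying that \eqref{eq:2DLocalTrace1}--\eqref{eq:2DLocalTrace3} are precisely the consistency conditions for a first–order jet at $(0,0)$, that subtracting $\chi P$ preserves membership in $H^{2}(\R_+)\times H^{1}(\R_+)$, and that it annihilates the data to exactly the order needed for the zero–extensions to land in $H^2(\R)\times H^1(\R)$. Everything else is bookkeeping with traces, the Sobolev embedding, and the stated properties of $L_1$; the construction is visibly linear in $(\phi_1,\psi_1,\phi_2,\psi_2)$. The same template, with $L_2$ from Lemma~\ref{lem:3DLifting} in place of $L_1$, will drive the three–dimensional statements in Lemma~\ref{lem:2.5DLocalTrace} and Lemma~\ref{lem:3DLocalTrace}.
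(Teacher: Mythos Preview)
Your proof is correct, but it follows a different route from the paper's. The paper proceeds \emph{sequentially}: it extends $(\phi_1,\psi_1)$ to all of $\R$ by reflection, lifts via $L_1$ to get $\zeta_1$, and then subtracts the traces of $\zeta_1$ from $(\phi_2,\psi_2)$; the compatibility conditions \eqref{eq:2DLocalTrace1}--\eqref{eq:2DLocalTrace3} then force the corrected data $(\tilde\phi_2,\tilde\psi_2)$ to vanish to the right order at $0$, so they can be zero-extended and lifted to $\zeta_2$, which by property~(iii) of $L_1$ does not disturb the traces on $\{x_1=0\}$. Your approach is \emph{symmetric}: you read \eqref{eq:2DLocalTrace1}--\eqref{eq:2DLocalTrace3} as the existence of a consistent first-order jet $P$ at the corner, subtract $\chi P$ once and for all, and then lift the two reduced data pairs independently. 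Your version makes the geometric meaning of the compatibility conditions very transparent and avoids the asymmetry of the reflection step.

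One caveat: your closing claim that ``the same template \ldots\ will drive the three-dimensional statements'' is too quick. In Lemma~\ref{lem:3DLocalTrace} the compatibility constraints \eqref{eq:phi1phi2}--\eqref{eq:psi3phi1} are functional identities along the edges, not finitely many scalar conditions at the vertex, so subtracting a first-order polynomial jet at the corner does not by itself make all three face data zero-extendable to $\R^2$. There one is essentially forced into the paper's sequential scheme (lift on the first face, correct and lift on the second, correct again and lift on the third), which is indeed how the paper proves Lemma~\ref{lem:3DLocalTrace}.
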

\begin{proof}
\par
  First we extend $\phi_1$ and $\psi_1$ to $\R$, so that the extensions (still denoted by
  $\phi_1$ and $\psi_1$) satisfy $\phi_1\in H^{2}(\R)$ and  $\psi_1\in H^{1}(\R)$.
   This can be achieved by reflection (cf. \cite[Theorem~2.3.9]{Necas:2012:Direct} and \cite[Theorem~5.19]{ADAMS:2003:Sobolev}).
  Let $L_1$ be the lifting operator in Lemma~\ref{lem:2DLifting} and
  $\zeta_1=L_1(\phi_1,\psi_1)\in H^{\frac52}(\R^2)$ so that
 \begin{equation}\label{eq:2DZeta}
  \zeta_1(0,x_2)=\phi_1(x_2)\quad\text{and}\quad
  (\p\zeta_1/\p x_1)(0,x_2)=\psi_1(x_2).
 \end{equation}
   Then we define
  $\tilde \phi_2(x_1)=\phi_2(x_1)-\zeta_1(x_1,0)$, $\tilde \psi_2(x_1)=\psi_2(x_1)-(\p\zeta_1/\p x_2)(x_1,0)$
  for $x_1>0$.
\par
 Note that $\tilde \phi_2\in H^{2}(\R_+)$, and
\begin{equation*}
 \tilde\phi_2(0)=\phi_2(0)-\zeta_1(0,0)=\phi_2(0)-\phi_1(0)=0
\end{equation*}
 by   \eqref{eq:2DLocalTrace1} and \eqref{eq:2DZeta},
 and
\begin{equation*}
 \tilde\phi_2'(0)=\phi_2'(0)-(\p\zeta_1/\p x_1)(0,0)=\phi_2'(0)-\psi_1(0)=0
\end{equation*}
 by  \eqref{eq:2DLocalTrace2} and \eqref{eq:2DZeta}.
   Moreover we have $\tilde\psi_2\in H^{1}(\R_+)$, and
\begin{equation*}
 \tilde\psi_2(0)=\psi_2(0)-(\p\zeta_1/\p x_2)(0,0)=\psi_2(0)-\phi_1'(0)=0
\end{equation*}
 by \eqref{eq:2DLocalTrace3} and \eqref{eq:2DZeta}.
Hence their trivial extensions (still denoted by $\tilde\phi_2$ and $\tilde\psi_2$)    satisfy
 $\tilde\phi_2\in H^{2}(\R)$ and $\tilde\psi_2\in H^{1}(\R)$.
\par
 Let $\zeta_2=L_1(\tilde\phi_2,\tilde\psi_2)\in H^{\frac52}(\R^2)$ such that $\zeta_2(x_1,0)=\tilde\psi_2(x_1,0)$
 and $(\p \zeta_2/\p x_2)(x_1,0)=\tilde\psi_1(x_1)$.  Then $\zeta_2=0$ on the half plane $x_1<0$ by
 Lemma~\ref{lem:2DLocalTrace}, which implies
\begin{equation*}
  \zeta_2(0,x_2)=(\p\zeta_2/\p x_1)(0,x_2)=0 \qquad\forall\, x_2>0.
\end{equation*}
 We can now take $\zeta$ to be the restriction of
 $\zeta_1+\zeta_2$ to $\R_+\times\R_+$.
\end{proof}
\par
 Next we consider the three dimensional analog of Lemma~\ref{lem:2DLocalTrace}.
\begin{lemma}\label{lem:3DLocalTrace}
  Let $(\phi_1,\psi_1)$, $(\phi_2,\psi_2)$ and $(\phi_3,\psi_3)$ belong to
  $H^{2}(\RPTwo)\times H^{1}(\RPTwo)$
   such that the following conditions are satisfied$\,:$
\begin{alignat}{3}
  \phi_1(0,x_3)&=\phi_2(0,x_3)&\qquad&\forall\,x_3>0,\label{eq:phi1phi2}\\
  \phi_2(x_1,0)&=\phi_3(x_1,0)&\qquad&\forall\,x_1>0,\label{eq:phi2phi3}\\
  \phi_3(0,x_2)&=\phi_1(x_2,0)&\qquad&\forall x_2>0,\label{eq:phi3phi1}\\
  \psi_1(0,x_3)&=\frac{\p\phi_2}{\p x_1}(0,x_3)&\qquad&\forall\,x_3>0,\label{eq:psi1phi2}\\
  \psi_1(x_2,0)&=\frac{\p\phi_3}{\p x_1}(0,x_2)&\qquad&\forall\,x_2>0,\label{eq:psi1phi3}\\
  \psi_2(x_1,0)&=\frac{\p\phi_3 }{\p x_2}(x_1,0)&\qquad&\forall\,x_1>0,\label{eq:psi2phi3}\\
  \psi_2(0,x_3)&=\frac{\p\phi_1 }{\p x_2}(0,x_3)&\qquad&\forall\,x_3>0,\label{eq:psi2phi1}\\
  \psi_3(x_1,0)&=\frac{\p\phi_2}{\p x_3}(x_1,0)&\qquad&\forall\,x_1>0,\label{eq:psi3phi2}\\
  \psi_3(0,x_2)&=\frac{\p\phi_1}{\p x_3}(x_2,0)&\qquad&\forall\,x_2>0.\label{eq:psi3phi1}
\end{alignat}
 Then there exists $\zeta\in H^\frac52(\RPThree)$ such that
\begin{equation}\label{eq:3DLocalTrace1}
  (\zeta,\p\zeta/\p x_i)=(\phi_i,\psi_i) \qquad\text{if $\;x_i=0$}, \;1\leq i\leq 3.
\end{equation}
\end{lemma}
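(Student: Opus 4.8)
The plan is to imitate the two-dimensional proof of Lemma~\ref{lem:2DLocalTrace}, peeling off one coordinate direction at a time and using the three-dimensional lifting operator $L_2$ from Lemma~\ref{lem:3DLifting} together with its support-preservation property. First I would extend $(\phi_1,\psi_1)$ from $\RPTwo$ to all of $\R^2$ (by reflection in the $x_2$-variable, say, so that the extensions remain in $H^2(\R^2)\times H^1(\R^2)$), apply $L_2$ to obtain $\zeta_1=L_2(\phi_1,\psi_1)\in H^{5/2}(\R^3)$ with $\zeta_1(0,x_2,x_3)=\phi_1(x_2,x_3)$ and $(\p\zeta_1/\p x_1)(0,x_2,x_3)=\psi_1(x_2,x_3)$ on the appropriate half-plane. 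Here I must be mildly careful: $L_2$ as stated lifts data on $\{x_3=0\}$, so I will relabel coordinates when invoking it for each face, or equivalently note that the analogous operator exists for each coordinate hyperplane.

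Next I would subtract off $\zeta_1$ and work with the correction. Define, for $x_1,x_3>0$,
\begin{equation*}
 \tilde\phi_2(x_1,x_3)=\phi_2(x_1,x_3)-\zeta_1(x_1,0,x_3),\qquad
 \tilde\psi_2(x_1,x_3)=\psi_2(x_1,x_3)-(\p\zeta_1/\p x_2)(x_1,0,x_3).
\end{equation*}
The compatibility conditions \eqref{eq:phi1phi2}, \eqref{eq:psi1phi2} and \eqref{eq:psi2phi1} are exactly what is needed to show that $\tilde\phi_2$ and its $x_1$-derivative, and $\tilde\psi_2$, all vanish on $\{x_1=0\}$; hence the trivial extensions of $\tilde\phi_2,\tilde\psi_2$ across $\{x_1=0\}$ lie in $H^2\times H^1$ of a half-plane, and after a further (reflection) extension in the remaining free variable they lie in $H^2(\R^2)\times H^1(\R^2)$. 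Applying $L_2$ (in the $x_2$-direction) gives $\zeta_2\in H^{5/2}(\R^3)$ that vanishes on $\{x_1<0\}$ and realizes $(\tilde\phi_2,\tilde\psi_2)$ on $\{x_2=0\}$; consequently $\zeta_1+\zeta_2$ realizes $(\phi_1,\psi_1)$ on $\{x_1=0\}$ and $(\phi_2,\psi_2)$ on $\{x_2=0\}$.

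Finally I would repeat the subtraction once more for the third face: set $\tilde\phi_3(x_1,x_2)=\phi_3(x_1,x_2)-(\zeta_1+\zeta_2)(x_1,x_2,0)$ and $\tilde\psi_3(x_1,x_2)=\psi_3(x_1,x_2)-(\p(\zeta_1+\zeta_2)/\p x_3)(x_1,x_2,0)$. Now the remaining compatibility relations \eqref{eq:phi2phi3}, \eqref{eq:phi3phi1}, \eqref{eq:psi1phi3}, \eqref{eq:psi2phi3}, \eqref{eq:psi3phi2}, \eqref{eq:psi3phi1}, together with the traces already arranged for $\zeta_1+\zeta_2$ on the two faces $\{x_1=0\}$ and $\{x_2=0\}$, must force $\tilde\phi_3$ and $\nabla_{F}\tilde\phi_3$ and $\tilde\psi_3$ to vanish along \emph{both} edges $\{x_1=0,x_2>0\}$ and $\{x_2=0,x_1>0\}$ of the quarter-plane. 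This double vanishing is the delicate point: I expect the main obstacle to be checking that the trivial extension of $(\tilde\phi_3,\tilde\psi_3)$ by zero across those two edges of the first quadrant stays in $H^2(\R^2)\times H^1(\R^2)$ — vanishing of the function and its first derivatives on a corner of the quadrant is what makes the zero-extension admissible, and one needs the compatibility at the corner point $(0,0)$ (implied by the hypotheses) for this to go through. Granting that, $L_2$ in the $x_3$-direction produces $\zeta_3$ vanishing on $\{x_1<0\}$ and on $\{x_2<0\}$, so $\zeta_3$ and $\p\zeta_3/\p x_3$ vanish on the faces $\{x_1=0\}$ and $\{x_2=0\}$, and the restriction of $\zeta_1+\zeta_2+\zeta_3$ to $\RPThree$ is the desired $\zeta$, with $H^{5/2}$-regularity inherited from the three summands.
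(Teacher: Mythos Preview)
Your proposal is correct and follows essentially the same route as the paper: extend $(\phi_1,\psi_1)$ to $\R^2$ (the paper reflects twice, once in each variable, since the data live on the quarter-plane $\RPTwo$), lift to $\zeta_1$, subtract and repeat for $(\phi_2,\psi_2)$ and then $(\phi_3,\psi_3)$, using the support-preservation property of $L_2$ at each stage so that earlier faces are left undisturbed. The paper carries out exactly the edge-by-edge verifications you anticipate for $\tilde\phi_3$, $\tilde\psi_3$; your worry about extra ``corner compatibility'' is unnecessary, since vanishing of $\tilde\phi_3$, $\p\tilde\phi_3/\p n$, and $\tilde\psi_3$ on both edges of the quadrant already suffices for the trivial extension to lie in $H^2(\R^2)\times H^1(\R^2)$.
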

\begin{proof}
  First we extend $\phi_1$ and $\psi_1$ to $\R^2$ by reflection (twice)
  so that the extensions (still denoted by $\phi_1$ and $\psi_1$)
  satisfy $\phi_1\in H^2(\R^2)$ and $\psi_1\in H^1(\R^2)$.  Let $L_2$ be the lifting operator
  in Lemma~\ref{lem:3DLifting} and $\zeta_1=L_2(\phi_1,\psi_1)$ so that
\begin{align}
  \zeta_1(0,x_2,x_3)&=\phi_1(x_2,x_3),\label{eq:zeta11}\\
  (\p\zeta_1/\p x_1)(0,x_2,x_3)&=\psi_1(x_2,x_3).\label{eq:zeta12}
\end{align}
  Then we define, for $(x_1,x_3)\in \RPTwo$,
\begin{align}
 \tilde\phi_2(x_1,x_3)&=\phi_2(x_1,x_3)-\zeta_1(x_1,0,x_3),\label{eq:tildephi2}\\
  \tilde\psi_2(x_1,x_3)&=\psi_2(x_1,x_3)-(\p\zeta_1/\p x_2)(x_1,0,x_3). \label{eq:tildepsi2}
\end{align}
\par
 Note that $\tilde\phi_2$ belongs to
 $H^2(\RPTwo)$ and
\begin{equation*}
  \tilde\phi_2(0,x_3)=\phi_2(0,x_3)-\zeta_1(0,0,x_3)=\phi_2(0,x_3)-\phi_1(0,x_3)
  =0\qquad\text{for}\quad x_3>0
\end{equation*}
  by \eqref{eq:phi1phi2}, \eqref{eq:zeta11} and \eqref{eq:tildephi2}, and
\begin{equation*}
 \frac{\p \tilde\phi_2}{\p x_1}(0,x_3)=\frac{\p\phi_2}{\p x_1}(0,x_3)-\frac{\p\zeta_1}{\p x_1}(0,0,x_3) =
     \frac{\p\phi_2}{\p x_1}(0,x_3)-\psi_1(0,x_3)=0
\end{equation*}
  by \eqref{eq:psi1phi2}, \eqref{eq:zeta12} and \eqref{eq:tildephi2}.  Furthermore $\tilde\psi_2$ belongs to
 $H^1(\RPTwo)$ and
\begin{equation*}
 \tilde\psi_2(0,x_3)=\psi_2(0,x_3)- \frac{\p\zeta_1}{\p x_2}(0,0,x_3)
 =\psi_2(0,x_3)- \frac{\p\phi_1}{\p x_2}(0,x_3)=0\qquad\text{for}\quad x_3>0
\end{equation*}
  by \eqref{eq:psi2phi1}, \eqref{eq:zeta11} and \eqref{eq:tildepsi2}.
\par
 Hence we can extend $\tilde\phi_2$ and $\tilde\psi_2$ to $\R_+\times\R$ by reflection across $x_3=0$ (still denoted by
 $\tilde\phi_2$ and $\tilde\psi_2$) so that $\tilde\phi_2\in H^2(\R_+\times\R)$,
 $\tilde\psi_2\in H^2(\R_+\times\R)$,
 $\tilde\phi_2(0,x_3)=(\p \tilde\phi_2/\p x_1)(0,x_3)=0$ for $x_3\in\R$ and
 $\tilde\psi_2(0,x_3)=0$ for $x_3\in\R$.  Therefore
  the trivial extensions of $\tilde\phi_2$ and $\tilde\psi_2$ to
 $\R^2$ (still denoted by $\tilde\phi_2$ and $\tilde\psi_2$) belong to $H^2(\R^2)$ and $H^1(\R^2)$ respectively.
\par
 Let $\zeta_2=L_2(\tilde\phi_2,\tilde\psi_2)$.  Then we have, by Lemma~\ref{lem:3DLifting},
\begin{align}
 \zeta_2(x_1,0,x_3)&=\tilde\phi_2(x_1,x_3),\label{eq:zeta21}\\
 (\p\zeta_2/\p x_2)(x_1,0,x_3)&=\tilde\psi_2(x_1,x_3),\label{eq:zeta22}
\end{align}
  and
   $$\zeta_2(x_1,x_2,x_3)=0 \qquad\text{if $x_1<0$},$$
  which implies
\begin{equation}\label{eq:zeta2trace}
  \zeta_2=\p\zeta_2/\p x_1=0 \qquad \text{if $x_1=0$.}
\end{equation}
\par
 We  now define, for $(x_1,x_2)\in\R_+\times\R_+$,
\begin{align}
   \tilde\phi_3(x_1,x_2)&=\phi_3(x_1,x_2)-\zeta_1(x_1,x_2,0)-\zeta_2(x_1,x_2,0),\label{eq:tildephi3}\\
 \tilde\psi_3(x_1,x_2)&=\psi_3(x_1,x_2)-(\p\zeta_1/\p x_3)(x_1,x_2,0)-(\p\zeta_2/\p x_3)(x_1,x_2,0).
 \label{eq:tildepsi3}
\end{align}
 Then $\tilde\phi_3$  (resp., $\tilde\psi_3$) belongs to $H^2(\RPTwo)$
 (resp., $H^1(\RPTwo)$).
\par
 Moreover, it follows from \eqref{eq:phi3phi1},
  \eqref{eq:zeta11}, \eqref{eq:zeta2trace}  and \eqref{eq:tildephi3} that
\begin{equation*}
 \tilde\phi_3(0,x_2)=\phi_3(0,x_2)-\zeta_1(0,x_2,0)=
 \phi_3(0,x_2)-\phi_1(x_2,0)=0 \quad\text{for}\;x_2>0,
\end{equation*}
 and \eqref{eq:psi1phi3},  \eqref{eq:zeta12}, \eqref{eq:zeta2trace} and \eqref{eq:tildephi3}  imply
\begin{equation*}
\frac{\p\tilde\phi_3}{\p x_1}(0,x_2)=\frac{\p\phi_3}{\p x_1}(0,x_2)-\frac{\p\zeta_1}{\p x_1}(0,x_2,0)
=\frac{\p\phi_3}{\p x_1}(0,x_2)-\psi_1(x_2,0)=0\quad\text{for}\;x_2>0.
\end{equation*}
 From \eqref{eq:psi3phi1}, \eqref{eq:zeta11}, \eqref{eq:zeta2trace}
 and \eqref{eq:tildepsi3} we also have
\begin{equation*}
  \tilde\psi_3(0,x_2)=\psi_3(0,x_2)-\frac{\p\zeta_1}{\p x_3}(0,x_2,0)=
  \psi_3(0,x_2)-\frac{\p\phi_1}{\p x_3}(x_2,0)\quad\text{for}\;x_2>0.
\end{equation*}
\par
 Next we check the behavior of $\tilde\phi_3$ and $\tilde\psi_3$ at $x_2=0$.
 We have
\begin{equation*}
 \tilde\phi_3(x_1,0)=\phi_3(x_1,0)-\zeta_1(x_1,0,0)-\zeta_2(x_1,0,0)
 =\phi_3(x_1,0)-\phi_2(x_1,0)=0\quad\text{for}\;x_1>0
\end{equation*}
 by \eqref{eq:phi2phi3}, \eqref{eq:tildephi2}, \eqref{eq:zeta21} and \eqref{eq:tildephi3};
\begin{align*}
  \frac{\p\tilde\phi_3}{\p x_2}(x_1,0)&=\frac{\p\phi_3}{\p x_2}(x_1,0)
 -\frac{\p \zeta_1}{\p x_2}(x_1,0,0)-\frac{\p\zeta_2}{\p x_2}(x_1,0,0)\\
   &=\frac{\p\phi_3}{\p x_2}(x_1,0)-\psi_2(x_1,0)=0\quad\hspace{100pt}\text{for}\;x_1>0
\end{align*}
  by \eqref{eq:psi2phi3}, \eqref{eq:tildephi2}, \eqref{eq:zeta22} and \eqref{eq:tildephi3};
\begin{align*}
 \tilde\psi_3(x_1,0)&=\psi_3(x_1,0)-\frac{\p\zeta_1}{\p x_3}(x_1,0,0)-\frac{\p\zeta_2}{\p x_3}(x_1,0,0)\\
     &=\psi_3(x_1,0)-\frac{\p \phi_2}{\p x_3}(x_1,0)=0 \quad\hspace{100pt}\text{for}\;x_1>0
\end{align*}
 by \eqref{eq:psi3phi2}, \eqref{eq:tildephi2}, \eqref{eq:zeta21} and \eqref{eq:tildepsi3}.
\par
 The calculations above show that
  $\tilde\phi_3=\p \tilde\phi_3/\p n=\tilde\psi_3=0$ on
 the boundary of $\RPTwo$.  Hence
 their trivial extensions to $\R^2$
 (still denoted by $\tilde\phi_3$ and $\tilde\psi_3$) belongs to $H^2(\R^2)$ and $H^1(\R^2)$.
\par
 Let $\zeta_3=L_2(\tilde\phi_1,\tilde\psi_1)$.  Then we have, by Lemma~\ref{lem:3DLifting},
  $\zeta_3\in H^3(\R^3)$,
\begin{align}
  \zeta_3(x_1,x_2,0)&=\tilde\phi_3(x_1,x_2),\label{eq:zeta31}\\
  (\p\zeta_3/\p x_3)(x_1,x_2,0)&=\tilde\psi_3(x_1,x_2),\label{eq:zeta32}
\end{align}
 and
  $$\zeta_3(x_1,x_2,x_3)=0 \qquad\text{ if $x_1<0$ or $x_2<0$},$$
  which implies
\begin{equation}\label{eq:zeta3Trace}
 \zeta_3=\frac{\p\zeta_3}{\p x_1}=0 \quad\text{if $x_1=0$} \quad \text{and} \quad
 \zeta_3=\frac{\p\zeta_3}{\p x_2}=0\quad\text{if $x_2=0$}.
\end{equation}
\par
   We can now take $\zeta$ to be the restriction of $\zeta_1+\zeta_2+\zeta_3$ to $\RPThree$,
   and \eqref{eq:3DLocalTrace1} follows from
   \eqref{eq:zeta11}--\eqref{eq:zeta3Trace},
\end{proof}
\par
 Finally we have a three-dimensional result that is two-dimensional in nature and which can be derived by using
 the arguments in the proof of either Lemma~\ref{lem:2DLocalTrace} or Lemma~\ref{lem:3DLocalTrace}.
\begin{lemma}\label{lem:2.5DLocalTrace}
 Let $(\phi_1,\psi_1)$ and $(\phi_2,\psi_2)$ belong to $H^2(\R_+\times\R)\times H^1(\R_+\times\R)$
 such that
\begin{alignat*}{3}
    \phi_1(0,x_3)&=\phi_2(0,x_3)&\qquad&\forall\;x_3\in\R,\\
    \psi_1(0,x_3)&=\frac{\p \phi_2}{\p x_1}(0,x_3)&\qquad&\forall\;x_3\in\R,\\
    \psi_2(0,x_3)&=\frac{\p \phi_1}{\p x_2}(0,x_3)&\qquad&\forall\;x_3\in\R.
\end{alignat*}
 Then there exists $\zeta\in H^\frac52(\R_+\times\R_+\times\R)$ such that
\begin{equation*}
 (\zeta,\p\zeta/\p x_i)=(\phi_i,\psi_i)\qquad\text{if $x_i=0$, $1\leq i\leq 2$}.
\end{equation*}
\end{lemma}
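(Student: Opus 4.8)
The plan is to imitate the proof of Lemma~\ref{lem:2DLocalTrace} almost verbatim, treating $x_3$ as a passive variable that already ranges over all of $\R$, and using the three-dimensional lifting operator $L_2$ of Lemma~\ref{lem:3DLifting} in place of $L_1$ (the three-dimensional lifting is needed because the boundary data now depend on two variables). Because the $x_3$ variable is already defined on all of $\R$, no reflection in $x_3$ will be required anywhere, and only two lifts will be needed rather than the three appearing in the proof of Lemma~\ref{lem:3DLocalTrace}; this is what makes the statement ``two-dimensional in nature''.

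First I would extend $\phi_1$ and $\psi_1$ from $\R_+\times\R$ to $\R^2$ by reflection across $x_2=0$, so that the extensions lie in $H^2(\R^2)$ and $H^1(\R^2)$, and then set $\zeta_1=L_2(\phi_1,\psi_1)\in H^\frac52(\R^3)$ (with the coordinates relabelled so that the prescribed data sit on the plane $x_1=0$), so that $\zeta_1(0,x_2,x_3)=\phi_1(x_2,x_3)$ and $(\p\zeta_1/\p x_1)(0,x_2,x_3)=\psi_1(x_2,x_3)$. Next, for $(x_1,x_3)\in\R_+\times\R$ I would introduce the corrected data $\tilde\phi_2(x_1,x_3)=\phi_2(x_1,x_3)-\zeta_1(x_1,0,x_3)$ and $\tilde\psi_2(x_1,x_3)=\psi_2(x_1,x_3)-(\p\zeta_1/\p x_2)(x_1,0,x_3)$, which belong to $H^2(\R_+\times\R)$ and $H^1(\R_+\times\R)$. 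Exactly as in Lemma~\ref{lem:2DLocalTrace}, the three compatibility hypotheses of the statement force $\tilde\phi_2(0,x_3)=0$, $(\p\tilde\phi_2/\p x_1)(0,x_3)=0$ and $\tilde\psi_2(0,x_3)=0$ for all $x_3\in\R$, so the trivial extensions of $\tilde\phi_2$ and $\tilde\psi_2$ across $x_1=0$ belong to $H^2(\R^2)$ and $H^1(\R^2)$ respectively. I would then set $\zeta_2=L_2(\tilde\phi_2,\tilde\psi_2)$, with the coordinates relabelled so that the data lie on the plane $x_2=0$; this gives $\zeta_2(x_1,0,x_3)=\tilde\phi_2(x_1,x_3)$, $(\p\zeta_2/\p x_2)(x_1,0,x_3)=\tilde\psi_2(x_1,x_3)$, and, since $\tilde\phi_2$ and $\tilde\psi_2$ vanish on the half-plane $x_1<0$, the vanishing property $(iii)$ of Lemma~\ref{lem:3DLifting} gives $\zeta_2=\p\zeta_2/\p x_1=0$ on $x_1=0$.

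Finally I would take $\zeta$ to be the restriction of $\zeta_1+\zeta_2$ to $\R_+\times\R_+\times\R$. On $x_1=0$ the contributions of $\zeta_2$ and $\p\zeta_2/\p x_1$ vanish, so $(\zeta,\p\zeta/\p x_1)=(\phi_1,\psi_1)$; on $x_2=0$ the definitions of $\tilde\phi_2$ and $\tilde\psi_2$ give $(\zeta,\p\zeta/\p x_2)=(\phi_2,\psi_2)$; and $\zeta\in H^\frac52(\R_+\times\R_+\times\R)$ since $\zeta_1,\zeta_2\in H^\frac52(\R^3)$. The only point requiring care is the bookkeeping of the two coordinate relabellings used when invoking $L_2$ — each must simultaneously place the data on the correct coordinate plane and align the half-space on which the lift vanishes with $\{x_1<0\}$ — and I do not expect any genuine obstacle beyond this.
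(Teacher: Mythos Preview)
Your proposal is correct and is precisely what the paper has in mind: the paper does not spell out a proof but simply remarks that the lemma ``can be derived by using the arguments in the proof of either Lemma~\ref{lem:2DLocalTrace} or Lemma~\ref{lem:3DLocalTrace}'', and you have faithfully carried out the Lemma~\ref{lem:2DLocalTrace} route with $L_2$ replacing $L_1$ and $x_3$ treated as a spectator variable. Your caveat about the coordinate relabellings is the only real bookkeeping issue, and it resolves cleanly (for $\zeta_2$, swapping $x_2\leftrightarrow x_3$ in the frame of $L_2$ places the data on the correct plane while keeping the half-space $\{x_1<0\}$ fixed, so property~(iii) of Lemma~\ref{lem:3DLifting} applies as you need).
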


\end{document}